\documentclass[11pt]{article}
\usepackage[left=1in, right=1in, top=1.2in, bottom=1.2in]{geometry}
\usepackage{amsmath,amssymb,amsthm}
\usepackage{mathrsfs}
\usepackage{enumerate}
\usepackage[colorlinks=true,linkcolor=blue,citecolor=blue,urlcolor=blue]{hyperref}
\usepackage{csquotes}
\usepackage{natbib}

\newtheorem{theorem}{Theorem}[section]
\newtheorem{lemma}[theorem]{Lemma}
\newtheorem{proposition}[theorem]{Proposition}
\newtheorem{corollary}[theorem]{Corollary}
\newtheorem{conjecture}[theorem]{Conjecture}

\theoremstyle{definition}

\newtheorem{definition}[theorem]{Definition}
\newtheorem{example}[theorem]{Example}
\newtheorem{remark}[theorem]{Remark}

\begin{document}
	
	\title{Ideal Zero-Product Probability in Finite Commutative Rings}
	
	\author{Sukrit Chakraborty\thanks{Department of Mathematics, Achhruram Memorial College, Jhalda, Purulia 723202, West Bengal, India. Email: sukritpapai@gmail.com} \and 
		Sourav Kanti Patra\thanks{Department of Mathematics, Kishori Sinha Mahila College, Q976+WXP, Aurangabad, Bihar 824101, India. Email: souravkantipatra@gmail.com}}
	
	\date{\today}
	
	\maketitle
	
	
	\begin{abstract}
		
		We introduce the \emph{ideal zero-product probability}, a new probabilistic invariant associated with a finite commutative ring $R$, defined by
		\[
			\zeta_k(R)
			=
			\frac{
				|\{(I_1,\ldots,I_k)\in\mathcal I(R)^k :
				I_1\cdots I_k=(0)\}|
			}
			{|\mathcal I(R)|^k},
		\]
		where $\mathcal I(R)$ denotes the set of all ideals of $R$. This quantity gives the probability that the product of $k$ independently and uniformly chosen ideals is the zero ideal. We establish its basic properties, including invariance under ring isomorphisms, monotonicity in $k$, and multiplicativity with respect to finite direct products. Explicit formulas are obtained for finite fields, finite Boolean rings, finite chain rings, and finite principal ideal rings. For finite chain rings, we prove that the invariant depends only on the Loewy length. We also derive closed formulas using inclusion--exclusion and bounded compositions, together with generating functions. Finally, we study the asymptotic behaviour of $\zeta_k(R)$ and prove that it converges to $1$ as $k\to\infty$.
		
		We also compare the ideal zero-product probability with the classical zero-product probability of ring elements on finite chain rings, obtaining explicit formulas and showing that the two invariants capture fundamentally different structural information.
		
	\end{abstract}

\noindent \textbf{Keywords:} Finite commutative rings;
ideal zero-product probability; ideal lattices; finite chain rings; principal ideal rings; Loewy length;
bounded compositions; generating functions.

\noindent \textbf{Mathematics Subject Classification (2020):}
Primary 16P10; Secondary 13C05, 05A15, 60C05.
	
		
	\section{Introduction}
	
	The use of probabilistic methods in algebra has become an active area of research over the last several decades. Instead of studying algebraic structures only through deterministic invariants, one considers the probability that randomly chosen objects satisfy a prescribed algebraic property. Such probabilities often reveal structural information that is difficult to detect by purely algebraic methods and have led to several important invariants for finite groups, rings and other algebraic systems.
	
	One of the earliest and most influential examples is the \emph{commuting probability} of a finite group, introduced by Gustafson \cite{Gustafson1973}, which is the probability that two randomly chosen group elements commute. Since then, many analogous probabilistic invariants have been investigated for finite groups and finite rings \cite{burness2023commuting, buckley2014finite, doostie2007finite}. In ring theory, examples include the commuting probability of finite rings \cite{MacHale1976,DuttaNath2015}, the probability that the product of randomly chosen elements is zero, and more generally, probabilities associated with annihilating conditions and other algebraic relations.
	
	Another important probabilistic invariant is the \emph{nullity degree} of a finite ring, namely the probability that the product of two randomly chosen ring elements is zero. This invariant was studied by Esmkhani and Jafarian Amiri \cite{esmkhani2019characterization} and has attracted considerable attention because of its close connection with the multiplicative structure of finite rings. It measures the probability that the product of two randomly chosen ring elements is zero and has attracted considerable attention because of its close connection with zero-divisor graphs and the multiplicative structure of finite rings \cite{esmkhani2018probability, mohammed2022probability, dolvzan2022probability}. Variants and generalizations involving products of several elements have also been investigated. For example, Sarma and Subedi \cite{Sharma2015} studied the probability that the product of three randomly chosen elements of a finite ring is zero. They also established several structural properties of this invariant. Later, they extended this work in \cite{sarma2025probabilityproductkelements}. 
	
	All of the above invariants are defined by selecting \emph{elements} of a ring uniformly at random. A natural question is whether similar probabilistic ideas can be developed by choosing \emph{ideals} instead of elements. Ideals carry considerably richer structural information than individual elements, since they reflect the internal organization of a ring through inclusion, multiplication and decomposition. Moreover, the collection of all ideals forms a lattice under inclusion, known as the ideal lattice, which plays a fundamental role in commutative algebra. Studying probabilities on this lattice therefore provides a new viewpoint that combines probabilistic methods with the structural theory of ideals.
	
	Motivated by these observations, we introduce a new probabilistic invariant on the lattice of ideals of a finite commutative ring. Let
	\[
	\mathcal I(R)=\{\,I:I\text{ is an ideal of }R\,\}.
	\]
	For an integer $k\ge2$, we define
	\[
	\zeta_k(R)=
	\frac{
		|\{(I_1,\ldots,I_k)\in\mathcal I(R)^k:
		I_1I_2\cdots I_k=(0)\}|
	}
	{|\mathcal I(R)|^k},
	\]
	which is the probability that the product of $k$ independently and uniformly chosen ideals is the zero ideal. Here and throughout, $(0)$ denotes the zero ideal of the ring $R$ and for any finite set $A$, the notation $|A|$ denotes the cardinality (number of elements) of the set $A$.
	
	Although the definition is simple, the resulting invariant exhibits several remarkable properties. We prove that it is invariant under ring isomorphisms, monotone with respect to the number of chosen ideals, and multiplicative under finite direct products. These properties make $\zeta_k(R)$ a genuine structural invariant of finite commutative rings.
	
	The main contribution of this paper is the development of the basic theory of the ideal zero-product probability. We derive explicit formulas for finite fields, finite Boolean rings, finite chain rings and finite principal ideal rings. A particularly non-trivial result shows that, for finite chain rings, the invariant depends only on the Loewy length of the ring and is completely independent of the characteristic and residue field. Using bounded compositions and the principle of inclusion--exclusion, we obtain closed formulas and generating functions for $\zeta_k(R)$. We also establish a probabilistic interpretation of the invariant and prove that
	\[
	\lim_{k\to\infty}\zeta_k(R)=1
	\]
	for every finite chain ring. One of the principal contributions of this paper is to show that the ideal zero-product probability behaves fundamentally differently from the classical zero-product probability of ring elements (see Corollary~\ref{cor:difference}). While the latter is governed by the multiplicative structure of the elements of the ring, the former is determined by the ideal lattice. In particular, for finite chain rings, it depends only on the Loewy length and is independent of both the characteristic and the residue field.
	
	To the best of our knowledge, no systematic probabilistic study of products of randomly chosen ideals has previously appeared in the literature. Consequently, the ideal zero-product probability provides a new probabilistic invariant associated with the ideal lattice of a finite commutative ring and opens several directions for further investigation.
	
	The paper is organized as follows. In Section~\ref{sec:def} we introduce the ideal zero-product probability and establish its basic properties. Section~\ref{sec:dirprod} studies its behaviour under finite direct products. In Section~\ref{sec:comput} we
	derive explicit formulas for the invariant for several important classes
	of finite commutative rings, with particular emphasis on finite chain
	rings and their dependence on the Loewy length. Sections~\ref{sec:clform} develops closed
	form expressions using bounded compositions and inclusion--exclusion. Section~\ref{sec:GF} develops generating functions for the invariant. Section~\ref{sec:examp} presents illustrative examples, while Section~\ref{sec:asymB} investigates its asymptotic behaviour. Section~\ref{sec:comparison} compares the ideal zero-product probability with the classical zero-product probability of ring elements by deriving an explicit formula for the latter on finite chain rings, thereby illustrating the fundamentally different structural information encoded by the two invariants. Finally, Section~\ref{sec:OP} lists several open problems for future research.
	
	
	\section{Ideal zero-product probability}\label{sec:def}
	In this section we introduce the principal probabilistic invariant studied in this paper. Let
	\[
	\mathcal{I}(R)=\{\,I : I \text{ is an ideal of } R\,\}
	\]
	denote the set of all ideals of a finite commutative ring $R$. Since $\mathcal{I}(R)$ is finite, one may select ideals independently and uniformly at random and consider the probability that their product is the zero ideal. This quantity captures an aspect of the multiplicative structure of the ideal lattice and provides a new probabilistic invariant of finite rings. We shall refer to this invariant as the \emph{ideal zero-product probability}. We now give its formal definition.
	
	\begin{definition}
		
		For a finite commutative ring $R$ and integer $k\ge2$ define
		
		\[
		\zeta_k(R)
		=
		\frac{
			|\{(I_1,\ldots,I_k):
			I_1\cdots I_k=(0)\}|
		}
		{|\mathcal I(R)|^k}.
		\]
		
	\end{definition}
    For convenience, we extend the definition to the cases $k=0$ and $k=1$ by setting
    \[
    \zeta_0(R)=0,\qquad
    \zeta_1(R)=\frac{1}{|I(R)|}.
    \]
    The value for $k=1$ agrees with the natural interpretation of the invariant, since the product of a single ideal is the ideal itself, which is equal to $(0)$ if and only if the chosen ideal is the zero ideal. The value of $\zeta_0(R)$ is introduced purely for notational convenience and is not used in the subsequent results.
	
	
	\subsection{Basic properties}
	
	We begin by recording a fundamental property of the ideal zero-product
	probability. Since $\zeta_k(R)$ is defined as the ratio of the number of
	ordered $k$-tuples of ideals whose product is the zero ideal to the total
	number of ordered $k$-tuples of ideals of $R$, it is naturally a probability.
	Moreover, the zero ideal itself belongs to every finite ring, so there always
	exist ordered $k$-tuples whose product is zero. Consequently, $\zeta_k(R)$ is
	strictly positive and cannot exceed one.
	
	
	\begin{theorem}\label{thm:isomorphism}
		Let $R$ and $S$ be two finite commutative rings. If
		\[
		R\cong S,
		\]
		i.e., $R$ and $S$ are isomorphic as rings, then
		\[
		\zeta_k(R)=\zeta_k(S)
		\]
		for every integer $k\ge2$.
	\end{theorem}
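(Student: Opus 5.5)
Fix a ring isomorphism $\varphi:R\to S$. The plan is to show that $\varphi$ induces a bijection $\mathcal I(R)\to\mathcal I(S)$, $I\mapsto\varphi(I)$, which respects products of ideals and sends $(0)$ to $(0)$. Then both the numerator and the denominator in the definition of $\zeta_k$ coincide for $R$ and $S$.

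\emph{Step 1.} We check that $\varphi(I)$ is an ideal of $S$ whenever $I$ is an ideal of $R$. It is an additive subgroup because $\varphi$ is additive. For $s\in S$ and $a\in I$, write $s=\varphi(r)$ using surjectivity; then $s\varphi(a)=\varphi(ra)\in\varphi(I)$. Applying the same argument to $\varphi^{-1}$ shows that $J\mapsto\varphi^{-1}(J)$ is an inverse map. Hence $I\mapsto\varphi(I)$ is a bijection, and $|\mathcal I(R)|=|\mathcal I(S)|$.

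\emph{Step 2.} We verify that $\varphi(I_1\cdots I_k)=\varphi(I_1)\cdots\varphi(I_k)$. The product $I_1\cdots I_k$ is the set of finite sums of elements $a_1\cdots a_k$ with $a_j\in I_j$. Since $\varphi$ is additive and multiplicative, it carries each such sum to the corresponding sum of products $\varphi(a_1)\cdots\varphi(a_k)$. This gives the inclusion $\subseteq$. For the reverse inclusion, every generator $b_1\cdots b_k$ with $b_j\in\varphi(I_j)$ has $b_j=\varphi(a_j)$, so it lies in the image; the image is closed under sums, so the inclusion $\supseteq$ follows. This step is the only one requiring any care, because ideal products are defined via finite sums rather than elementwise products. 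It is nonetheless routine; alternatively, one could prove the case $k=2$ and induct on $k$.

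\emph{Step 3.} Since $\varphi$ is injective, $\varphi(J)=(0)$ if and only if $J=(0)$. Combined with Step 2, $I_1\cdots I_k=(0)$ holds exactly when $\varphi(I_1)\cdots\varphi(I_k)=(0)$. Therefore the coordinatewise bijection $\mathcal I(R)^k\to\mathcal I(S)^k$ restricts to a bijection between the zero-product $k$-tuples of $R$ and those of $S$. The numerators are thus equal, the denominators are equal by Step 1, and $\zeta_k(R)=\zeta_k(S)$ for every $k\ge2$ (indeed for every $k\ge1$).
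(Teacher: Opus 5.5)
Your proof is correct and follows essentially the same route as the paper. Both arguments use the induced bijection $I\mapsto\varphi(I)$ on ideals, preservation of ideal products and of $(0)$, and the resulting bijection of zero-product $k$-tuples with equal denominators; you additionally verify that $\varphi(I)$ is an ideal and that $\varphi$ respects products via finite sums, details the paper simply asserts.
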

	
	\begin{proof}
		Let $\varphi:R\rightarrow S$ be a ring isomorphism. Since $\varphi$ is
		bijective, it induces a bijection
		\[
		\Phi:\mathcal{I}(R)\longrightarrow\mathcal{I}(S),
		\qquad
		I\longmapsto \varphi(I),
		\]
		where $\mathcal{I}(R)$ and $\mathcal{I}(S)$ denote the sets of all ideals of
		$R$ and $S$, respectively. Consequently,
		\[
		|\mathcal{I}(R)|=|\mathcal{I}(S)|.
		\]
		
		Now define
		\[
		\Psi:\mathcal{I}(R)^k\longrightarrow\mathcal{I}(S)^k
		\]
		by
		\[
		\Psi(I_1,\ldots,I_k)
		=
		(\varphi(I_1),\ldots,\varphi(I_k)).
		\]
		Clearly, $\Psi$ is a bijection.
		
		Since $\varphi$ is a ring isomorphism, it preserves products of ideals; that is,
		\[
		\varphi(I_1I_2\cdots I_k)
		=
		\varphi(I_1)\varphi(I_2)\cdots\varphi(I_k).
		\]
		Therefore,
		\[
		I_1I_2\cdots I_k=(0)
		\iff
		\varphi(I_1)\varphi(I_2)\cdots\varphi(I_k)=(0),
		\]
		because $\varphi((0))=(0)$.
		
		Hence $\Psi$ restricts to a bijection between the sets
		\[
		\{(I_1,\ldots,I_k)\in\mathcal{I}(R)^k:I_1I_2\cdots I_k=(0)\}
		\]
		and
		\[
		\{(J_1,\ldots,J_k)\in\mathcal{I}(S)^k:J_1J_2\cdots J_k=(0)\}.
		\]
		Thus,
		\[
		\left|
		\{(I_1,\ldots,I_k)\in\mathcal{I}(R)^k:I_1I_2\cdots I_k=(0)\}
		\right|
		=
		\left|
		\{(J_1,\ldots,J_k)\in\mathcal{I}(S)^k:J_1J_2\cdots J_k=(0)\}
		\right|.
		\]
		
		Since $|\mathcal{I}(R)|=|\mathcal{I}(S)|$, it follows that
		\begin{align*}
			\zeta_k(R)
			&=
			\frac{
				\left|
				\{(I_1,\ldots,I_k)\in\mathcal{I}(R)^k:
				I_1I_2\cdots I_k=(0)\}
				\right|
			}{
				|\mathcal{I}(R)|^k
			} \\[2mm]
			&=
			\frac{
				\left|
				\{(J_1,\ldots,J_k)\in\mathcal{I}(S)^k:
				J_1J_2\cdots J_k=(0)\}
				\right|
			}{
				|\mathcal{I}(S)|^k
			} =
			\zeta_k(S).
		\end{align*}
		This completes the proof.
	\end{proof}
	
	The following theorem establishes a fundamental monotonicity property of the ideal zero-product probability. As the number of randomly chosen ideals increases, the probability that their product is the zero ideal cannot decrease.
	\begin{theorem}\label{thm:monotonicity}
		Let $R$ be a finite commutative ring. Then, for every integer $k\ge2$,
		\[
		\zeta_k(R)\le \zeta_{k+1}(R).
		\]
		Consequently, the sequence $\{\zeta_k(R)\}_{k\ge2}$ is monotonically
		increasing.
	\end{theorem}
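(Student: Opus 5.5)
The plan is to compare the two probabilities through an explicit injection that accounts for the change in the denominator. Write $N=|\mathcal I(R)|$ and
\[
Z_k=\{(I_1,\ldots,I_k)\in\mathcal I(R)^k : I_1\cdots I_k=(0)\},
\]
so that $\zeta_k(R)=|Z_k|/N^k$. The desired inequality $\zeta_k(R)\le\zeta_{k+1}(R)$ is then equivalent to
\[
N\,|Z_k|\le |Z_{k+1}|.
\]
So it suffices to produce an injective map $Z_k\times\mathcal I(R)\to Z_{k+1}$.

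The natural choice is
\[
\bigl((I_1,\ldots,I_k),J\bigr)\longmapsto (I_1,\ldots,I_k,J).
\]
This map is clearly injective. Its image lies in $Z_{k+1}$ because
\[
I_1\cdots I_kJ=(0)\cdot J=(0).
\]
The last equality holds since the product of the zero ideal with any ideal is the ideal generated by the products $0\cdot j=0$, which is $(0)$. Dividing $N|Z_k|\le|Z_{k+1}|$ by $N^{k+1}$ gives the inequality.

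Monotonicity of the sequence $\{\zeta_k(R)\}_{k\ge2}$ then follows by induction on $k$. I would also note that the same argument works for $k=1$ with the convention $\zeta_1(R)=1/N$, since $Z_1=\{(0)\}$.

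No step is a genuine obstacle. The only points needing a line of justification are the identity $(0)J=(0)$ and the remark that multiplying by $N$ exactly compensates for the extra factor of $N$ in the denominator. Commutativity and associativity of ideal multiplication ensure that the product $I_1\cdots I_kJ$ is unambiguous.
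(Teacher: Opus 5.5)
Your proposal is correct and is essentially the paper's own argument: the paper also extends each zero-product $k$-tuple by an arbitrary $(k+1)$-st ideal, obtains $|\mathcal{Z}_{k+1}(R)|\ge|\mathcal{Z}_k(R)|\,|\mathcal{I}(R)|$, and divides by $|\mathcal{I}(R)|^{k+1}$. Phrasing this as an explicit injection $Z_k\times\mathcal{I}(R)\to Z_{k+1}$ only makes the counting step a little more explicit.
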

	
	\begin{proof}
		For each integer $k\ge2$, define
		\[
		\mathcal{Z}_k(R)=
		\{(I_1,\ldots,I_k)\in\mathcal{I}(R)^k:
		I_1I_2\cdots I_k=(0)\},
		\]
		where $\mathcal{I}(R)$ denotes the set of all ideals of $R$.
		
		Let $(I_1,\ldots,I_k)\in\mathcal{Z}_k(R)$. Then
		\[
		I_1I_2\cdots I_k=(0).
		\]
		Since the product of the zero ideal with any ideal is again the zero ideal, it
		follows that
		\[
		I_1I_2\cdots I_kI_{k+1}=(0)
		\]
		for every ideal $I_{k+1}$ of $R$. Therefore, each ordered $k$-tuple in
		$\mathcal{Z}_k(R)$ gives rise to exactly $|\mathcal{I}(R)|$ ordered
		$(k+1)$-tuples in $\mathcal{Z}_{k+1}(R)$. Hence,
		\[
		|\mathcal{Z}_{k+1}(R)|
		\ge
		|\mathcal{Z}_k(R)|\,|\mathcal{I}(R)|.
		\]
		
		We emphasize that the above inequality may be strict. Indeed, a $(k+1)$-tuple may have zero product even though its first $k$ ideals do not, so not every zero-producing $(k+1)$-tuple is obtained by extending a zero-producing $k$-tuple. Dividing both sides of the above inequality by $|\mathcal{I}(R)|^{k+1}$, we obtain
		\[
		\frac{|\mathcal{Z}_{k+1}(R)|}{|\mathcal{I}(R)|^{k+1}}
		\ge
		\frac{|\mathcal{Z}_k(R)|}{|\mathcal{I}(R)|^k},
		\]
		that is,
		\[
		\zeta_{k+1}(R)\ge \zeta_k(R).
		\]
		
		Therefore,
		\[
		\zeta_k(R)\le\zeta_{k+1}(R),
		\]
		which completes the proof.
	\end{proof}

	
A natural question is whether the ideal zero-product probability can be
described solely in terms of the nilpotency index of the Jacobson radical.
Such a characterization is impossible in general. For example, if
$R=\mathbb{F}_q$ is a finite field, then $J(R)=(0)$, while
\[
\zeta_k(R)=1-\frac{1}{2^k}<1
\]
for every $k\ge2$. Thus, the nilpotency index of the Jacobson radical does not,
in general, determine the ideal zero-product probability. More broadly, for
semisimple rings the Jacobson radical carries no information about the
multiplication of nonzero ideals. This motivates restricting our attention to
finite chain rings, where every ideal is a power of the unique maximal ideal.
In this setting, ideal multiplication is governed entirely by the chain
structure of the ideals, allowing us to obtain explicit formulas and show that
the ideal zero-product probability depends only on the Loewy length of the
ring.
	
	
	\section{Direct products}\label{sec:dirprod}
	
	In this section we investigate the behaviour of the ideal zero-product probability under finite direct products of rings. Since many finite commutative rings admit canonical decompositions as direct products of simpler components, understanding the interaction of $\zeta_k(R)$ with direct products is fundamental for its computation. The main result of this section shows that the ideal zero-product probability is multiplicative with respect to finite direct products, thereby reducing the evaluation of $\zeta_k(R)$ for a large class of rings to the corresponding problem for their direct factors.
	\begin{theorem}\label{thm:direct-product}
		Let $R_1$ and $R_2$ be two finite commutative rings and let
		\[
		R=R_1\times R_2.
		\]
		Then, for every integer $k\ge2$,
		\[
		\zeta_k(R)=\zeta_k(R_1)\zeta_k(R_2).
		\]
	\end{theorem}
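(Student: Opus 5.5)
The plan is to use the standard description of ideals in a finite direct product of commutative rings with identity. Every ideal $I$ of $R=R_1\times R_2$ has the form $I=I^{(1)}\times I^{(2)}$, where $I^{(j)}$ is an ideal of $R_j$. Concretely, $I^{(1)}=\pi_1(I)$, and since $(1,0)\in R$ we have $(a,b)\in I\Rightarrow (a,0)=(1,0)(a,b)\in I$, which gives $I=\pi_1(I)\times\pi_2(I)$. Conversely, every product of ideals is an ideal. Hence the map
\[
\mathcal I(R_1)\times\mathcal I(R_2)\longrightarrow \mathcal I(R),\qquad (A,B)\longmapsto A\times B,
\]
is a bijection, and in particular $|\mathcal I(R)|=|\mathcal I(R_1)|\,|\mathcal I(R_2)|$.

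Next I would check that ideal multiplication is componentwise:
\[
(A_1\times B_1)(A_2\times B_2)=A_1A_2\times B_1B_2 .
\]
The inclusion $\subseteq$ is clear, because each generator $(a_1a_2,b_1b_2)$ lies in the right side and the right side is an ideal. For $\supseteq$, I would write $(a_1a_2,0)=(a_1,0)(a_2,0)$ and $(0,b_1b_2)=(0,b_1)(0,b_2)$, both generators of the left side, and then take finite sums. Induction on $k$ extends this to
\[
I_1\cdots I_k=(A_1\cdots A_k)\times(B_1\cdots B_k)\quad\text{for } I_i=A_i\times B_i.
\]
Consequently $I_1\cdots I_k=(0)$ if and only if $A_1\cdots A_k=(0)$ in $R_1$ and $B_1\cdots B_k=(0)$ in $R_2$.

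Transporting the bijection to $k$-tuples, reordered as $((A_i)_i,(B_i)_i)$, identifies $\mathcal Z_k(R)$ with $\mathcal Z_k(R_1)\times\mathcal Z_k(R_2)$. This gives $|\mathcal Z_k(R)|=|\mathcal Z_k(R_1)|\,|\mathcal Z_k(R_2)|$. Dividing by $|\mathcal I(R)|^k=|\mathcal I(R_1)|^k|\mathcal I(R_2)|^k$ yields $\zeta_k(R)=\zeta_k(R_1)\zeta_k(R_2)$.

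The main obstacle is the first step, that every ideal of $R$ splits as a product. This step uses the identity (through the idempotents $(1,0)$ and $(0,1)$). I will assume, as is implicit throughout the paper, that the rings are unital. Without identity the statement can fail: with zero multiplication, additive subgroups that are not products, such as a diagonal, are ideals. The reverse inclusion in the product formula also uses the identity only through these idempotents, so it is the same point.
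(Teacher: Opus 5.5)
Your proof is correct and is the paper's argument: ideals of $R_1\times R_2$ are products $A\times B$, and ideal multiplication is componentwise. So zero-product $k$-tuples of $R$ correspond bijectively to pairs of zero-product $k$-tuples of $R_1$ and $R_2$; you simply supply the verifications the paper dismisses as well known, and you rightly make the unital hypothesis explicit.

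Two small corrections to your closing remark. First, the inclusion $A_1A_2\times B_1B_2\subseteq(A_1\times B_1)(A_2\times B_2)$ uses no identity, only $0\in B_1$. Second, your zero-multiplication diagonal example breaks the splitting of ideals but not the theorem, since there every $\zeta_k$ equals $1$. A genuine non-unital failure needs nonzero products: for example, $R_1=R_2=2\mathbb{Z}/8\mathbb{Z}$ has $\zeta_2(R_1)=8/9$, but $\zeta_2(R_1\times R_2)=123/169\neq 64/81$.
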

	
	\begin{proof}
		Let $\mathcal{I}(R)$ denote the set of all ideals of a ring $R$. It is well known
		that every ideal of the direct product $R_1\times R_2$ is of the form
		\[
		I_1\times I_2,
		\]
		where $I_i\in\mathcal{I}(R_i)$ for $i=1,2$. Consequently,
		\[
		|\mathcal{I}(R_1\times R_2)|
		=
		|\mathcal{I}(R_1)|\,|\mathcal{I}(R_2)|.
		\]
		
		Now let
		\[
		(I_1\times J_1,\ldots,I_k\times J_k)
		\in
		\mathcal{I}(R_1\times R_2)^k,
		\]
		where $I_r\in\mathcal{I}(R_1)$ and
		$J_r\in\mathcal{I}(R_2)$ for $1\le r\le k$. Since ideal multiplication in a direct
		product is componentwise, we have
		\[
		(I_1\times J_1)\cdots(I_k\times J_k)
		=
		(I_1\cdots I_k)\times(J_1\cdots J_k).
		\]
		Therefore,
		\[
		(I_1\times J_1)\cdots(I_k\times J_k)=(0)
		\]
		if and only if
		\[
		I_1\cdots I_k=(0)
		\quad\text{and}\quad
		J_1\cdots J_k=(0).
		\]
		
		Hence the map
		\[
		\begin{aligned}
			&\{(I_1,\ldots,I_k)\in\mathcal{I}(R_1)^k:
			I_1\cdots I_k=(0)\}
			\\
			&\qquad\qquad\times
			\{(J_1,\ldots,J_k)\in\mathcal{I}(R_2)^k:
			J_1\cdots J_k=(0)\}
			\\
			&\longrightarrow
			\{(K_1,\ldots,K_k)\in\mathcal{I}(R)^k:
			K_1\cdots K_k=(0)\},
		\end{aligned}
		\]
		defined by
		\[
		\bigl((I_1,\ldots,I_k),(J_1,\ldots,J_k)\bigr)
		\longmapsto
		(I_1\times J_1,\ldots,I_k\times J_k),
		\]
		is a bijection. Consequently,
		\[
		\begin{aligned}
			&\left|
			\{(K_1,\ldots,K_k)\in\mathcal{I}(R)^k:
			K_1\cdots K_k=(0)\}
			\right|
			\\
			&=
			\left|
			\{(I_1,\ldots,I_k)\in\mathcal{I}(R_1)^k:
			I_1\cdots I_k=(0)\}
			\right|
			\\
			&\qquad\times
			\left|
			\{(J_1,\ldots,J_k)\in\mathcal{I}(R_2)^k:
			J_1\cdots J_k=(0)\}
			\right|.
		\end{aligned}
		\]
		
		Therefore,
		\[
		\begin{aligned}
			\zeta_k(R)
			&=
			\frac{
				\left|
				\{(K_1,\ldots,K_k)\in\mathcal{I}(R)^k:
				K_1\cdots K_k=(0)\}
				\right|
			}
			{|\mathcal{I}(R)|^k}
			\\
			&=
			\frac{
				\left|
				\{(I_1,\ldots,I_k)\in\mathcal{I}(R_1)^k:
				I_1\cdots I_k=(0)\}
				\right|
			}
			{|\mathcal{I}(R_1)|^k}
			\\
			&\qquad\times
			\frac{
				\left|
				\{(J_1,\ldots,J_k)\in\mathcal{I}(R_2)^k:
				J_1\cdots J_k=(0)\}
				\right|
			}
			{|\mathcal{I}(R_2)|^k}
			\\
			&=
			\zeta_k(R_1)\zeta_k(R_2).
		\end{aligned}
		\]
		This completes the proof.
	\end{proof}
	
	
	\begin{corollary}\label{cor:finite-product}
		Let
		\[
		R=\prod_{i=1}^{m}R_i,
		\]
		where $R_1,R_2,\ldots,R_m$ are finite commutative rings. Then, for every integer
		$k\ge2$,
		\[
		\zeta_k(R)
		=
		\prod_{i=1}^{m}\zeta_k(R_i).
		\]
	\end{corollary}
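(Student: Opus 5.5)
We argue by induction on $m$, using Theorem~\ref{thm:direct-product} as the inductive step. For $m=1$ there is nothing to prove: the product consists of the single factor $R_1$, and both sides equal $\zeta_k(R_1)$. The case $m=2$ is exactly Theorem~\ref{thm:direct-product}.

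For the inductive step, suppose the formula holds for products of $m-1$ finite commutative rings, with $m\ge 2$. Write
\[
R=\prod_{i=1}^{m}R_i\cong\Bigl(\prod_{i=1}^{m-1}R_i\Bigr)\times R_m,
\]
where the isomorphism is the canonical regrouping map
\[
(r_1,\ldots,r_m)\longmapsto\bigl((r_1,\ldots,r_{m-1}),r_m\bigr).
\]
This map is clearly a ring isomorphism, since all operations are componentwise. The ring $S=\prod_{i=1}^{m-1}R_i$ is again a finite commutative ring. By Theorem~\ref{thm:isomorphism},
\[
\zeta_k(R)=\zeta_k(S\times R_m).
\]
By Theorem~\ref{thm:direct-product},
\[
\zeta_k(S\times R_m)=\zeta_k(S)\,\zeta_k(R_m).
\]
The induction hypothesis gives
\[
\zeta_k(S)=\prod_{i=1}^{m-1}\zeta_k(R_i),
\]
and substituting this in yields the claimed product formula.

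The only point needing care is the identification of the iterated product with a binary product. This is why we invoke isomorphism invariance (Theorem~\ref{thm:isomorphism}) rather than treating $\prod_{i=1}^m R_i$ as literally equal to $S\times R_m$. Beyond that, the argument is routine, and there is no genuine obstacle.
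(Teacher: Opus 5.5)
Your proof is correct and follows the paper's own argument: induction on $m$, with Theorem~\ref{thm:direct-product} supplying the inductive step. Your explicit appeal to Theorem~\ref{thm:isomorphism} to justify regrouping $\prod_{i=1}^{m}R_i$ as $\bigl(\prod_{i=1}^{m-1}R_i\bigr)\times R_m$ adds a little rigor that the paper skips, since the paper simply writes this regrouping as an equality.
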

	
	\begin{proof}
		The proof follows immediately by induction on $m$. The case $m=2$ is
		Theorem~\ref{thm:direct-product}. Assume the result holds for $m-1$ rings.
		Then
		\[
		R=\left(\prod_{i=1}^{m-1}R_i\right)\times R_m.
		\]
		Applying Theorem~\ref{thm:direct-product} together with the induction
		hypothesis, we obtain
		\[
		\zeta_k(R)
		=
		\zeta_k\!\left(\prod_{i=1}^{m-1}R_i\right)\zeta_k(R_m)
		=
		\left(\prod_{i=1}^{m-1}\zeta_k(R_i)\right)\zeta_k(R_m)
		=
		\prod_{i=1}^{m}\zeta_k(R_i),
		\]
		which completes the proof.
	\end{proof}
	
	
	\section{Exact computations}\label{sec:comput}
	
	In this section we compute the ideal zero-product probability explicitly for several important classes of finite commutative rings. These computations illustrate the general properties established in the preceding sections and reveal how the invariant reflects the algebraic structure of the underlying ring. We begin with fields and finite Boolean rings, where the formulas follow directly from the simple description of their ideal lattices. We then turn to finite chain rings, for which the linear ordering of ideals allows a complete characterization of the ideal zero-product probability in terms of the Loewy length.
	
	\subsection{Fields}
	
	\begin{proposition}\label{prop:field}
		Let $F$ be a finite field. Then, for every integer $k\ge2$,
		\[
		\zeta_k(F)=1-\frac{1}{2^k}.
		\]
	\end{proposition}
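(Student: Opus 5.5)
The plan is to describe the ideal lattice of $F$ explicitly and then count tuples by complementary counting. Since $F$ is a field, its only ideals are $(0)$ and $F$ itself: any nonzero ideal contains a unit and hence equals $F$. Thus $\mathcal I(F)=\{(0),F\}$ and $|\mathcal I(F)|=2$, so the denominator in the definition of $\zeta_k(F)$ is $2^k$.

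Next we determine exactly when a $k$-tuple $(I_1,\ldots,I_k)\in\mathcal I(F)^k$ has zero product. The key observations are that $(0)\cdot J=(0)$ for every ideal $J$ and that $F\cdot F=F$. By a trivial induction on $k$, the product $I_1\cdots I_k$ equals $F$ if every $I_j=F$, and equals $(0)$ otherwise. It is worth noting that $F\ne(0)$, because a field has $1\neq 0$; this is what makes the all-$F$ tuple genuinely nonzero.

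Consequently, exactly one of the $2^k$ tuples, namely $(F,\ldots,F)$, fails to have zero product. Hence the number of zero-product tuples is $2^k-1$, and
\[
\zeta_k(F)=\frac{2^k-1}{2^k}=1-\frac{1}{2^k}.
\]
No step here is a real obstacle. The only point needing care is the claim that a field has exactly two ideals, and that is standard.
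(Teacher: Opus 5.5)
Your proof is correct and takes the same approach as the paper. You identify $\mathcal I(F)=\{(0),F\}$, note that the product vanishes exactly when some $I_j=(0)$, and subtract the single tuple $(F,\ldots,F)$ from the $2^k$ total to get $\zeta_k(F)=1-\frac{1}{2^k}$.
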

	
	\begin{proof}
		Since $F$ is a field, its only ideals are $(0)$ and $F$. Hence,
		\[
		|\mathcal{I}(F)|=2,
		\]
		and therefore the total number of ordered $k$-tuples of ideals of $F$ is
		\[
		|\mathcal{I}(F)|^k=2^k.
		\]
		
		Now let $(I_1,I_2,\ldots,I_k)\in\mathcal{I}(F)^k$. Since $F$ is the identity
		element for ideal multiplication, we have
		\[
		I_1I_2\cdots I_k=(0)
		\]
		if and only if at least one of the ideals $I_1,I_2,\ldots,I_k$ is the zero
		ideal. Indeed, if every $I_i=F$, then
		\[
		I_1I_2\cdots I_k=F\neq(0),
		\]
		whereas if $I_j=(0)$ for some $1\le j\le k$, then
		\[
		I_1I_2\cdots I_k=(0).
		\]
		
		Among the $2^k$ ordered $k$-tuples of ideals, exactly one tuple,
		\[
		(F,F,\ldots,F),
		\]
		contains no zero ideal. Hence,
		\[
		\left|
		\left\{
		(I_1,\ldots,I_k)\in\mathcal{I}(F)^k:
		I_1I_2\cdots I_k=(0)
		\right\}
		\right|
		=
		2^k-1.
		\]
		
		Therefore,
		\[
		\zeta_k(F)
		=
		\frac{2^k-1}{2^k}
		=
		1-\frac{1}{2^k},
		\]
		which completes the proof.
	\end{proof}
	The explicit formula obtained in Proposition~\ref{prop:field} naturally raises the question
	of whether the ideal zero-product probability characterizes finite fields among
	all finite commutative rings. Indeed, a finite field is the unique finite
	commutative ring having exactly two ideals, and consequently
	\[
	\zeta_k(F)=1-\frac{1}{2^k}.
	\]
	Computations for the standard classes of finite rings considered in this paper,
	including finite Boolean rings, finite chain rings, and finite principal ideal
	rings, suggest that this value may be attained only by fields. We therefore
	propose the following conjecture.
	
	\begin{conjecture}
		Let $R$ be a finite commutative ring with identity and let $k\ge2$. If
		\[
		\zeta_k(R)=1-\frac{1}{2^k},
		\]
		then $R$ is a field.
	\end{conjecture}
	
	\subsection{Boolean rings}
	
\begin{proposition}\label{prop:boolean}
	Let $R$ be a finite Boolean ring with $|R|=2^n$, where $n\ge1$. Then, for
	every integer $k\ge2$,
	\[
	\zeta_k(R)=\left(1-\frac{1}{2^k}\right)^n.
	\]
\end{proposition}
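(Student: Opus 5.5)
The plan is to reduce to the field case through the multiplicativity already established. The first step is to identify the structure of $R$. A finite Boolean ring satisfies $x^2=x$ for all $x$, so it is commutative of characteristic $2$. It is also reduced: if $x^m=0$, then idempotence gives $x=x^m=0$. A finite reduced commutative ring is Artinian with zero Jacobson radical, hence a finite product of finite fields. In each factor field every element satisfies $x^2=x$, so each factor is $\mathbb{F}_2$. Counting cardinalities, $|R|=2^n$ forces exactly $n$ factors, so
\[
R\cong \mathbb{F}_2^{\,n}.
\]
Alternatively, one can cite the standard structure theorem for finite Boolean rings. If we want to avoid the Artinian machinery, the decomposition can be built by hand: take the minimal nonzero idempotents $e_1,\dots,e_n$, note they are orthogonal and sum to $1$, and use $R=\bigoplus Re_i$ with each $Re_i=\{0,e_i\}\cong\mathbb{F}_2$.

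The second step is to apply Theorem~\ref{thm:isomorphism}, which gives $\zeta_k(R)=\zeta_k(\mathbb{F}_2^{\,n})$. Corollary~\ref{cor:finite-product} then gives $\zeta_k(\mathbb{F}_2^{\,n})=\zeta_k(\mathbb{F}_2)^n$. Finally, Proposition~\ref{prop:field} supplies $\zeta_k(\mathbb{F}_2)=1-2^{-k}$, and hence
\[
\zeta_k(R)=\left(1-\frac{1}{2^k}\right)^n .
\]

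The only step needing care is the structural identification $R\cong\mathbb{F}_2^{\,n}$, together with the point that the exponent $n$ in $|R|=2^n$ equals the number of factors. The rest is direct bookkeeping with earlier results.

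As an independent sanity check, we can count directly. The ideals of $\mathbb{F}_2^{\,n}$ correspond to subsets $S\subseteq\{1,\dots,n\}$ (the coordinates where the ideal is nonzero), and products of ideals correspond to intersections of these subsets. Thus a $k$-tuple of ideals has zero product exactly when, for each coordinate, at least one of the $k$ subsets omits it. This happens with probability $1-2^{-k}$ independently for each coordinate, which reproduces the formula.
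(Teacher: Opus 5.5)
Your proof is correct and follows the paper's route. You identify $R\cong\mathbb{F}_2^{\,n}$ and then apply Theorem~\ref{thm:isomorphism}, Corollary~\ref{cor:finite-product}, and Proposition~\ref{prop:field}; the paper simply cites the structure theorem for finite Boolean rings as well known, while you also justify it and check that the number of factors equals $n$.
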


\begin{proof}
	It is well known that every finite Boolean ring is isomorphic to a finite direct
	product of copies of the field $\mathbb{F}_2$. Thus,
	\[
	R\cong \mathbb{F}_2^n
	=\underbrace{\mathbb{F}_2\times\cdots\times\mathbb{F}_2}_{n\text{ times}}.
	\]
	
	Since $\zeta_k$ is invariant under ring isomorphism (Theorem~\ref{thm:isomorphism}),
	we may assume that
	\[
	R=\mathbb{F}_2^n.
	\]
	
	Now, by Corollary~\ref{cor:finite-product},
	\[
	\zeta_k(R)
	=
	\prod_{i=1}^{n}\zeta_k(\mathbb{F}_2).
	\]
	Applying Proposition~\ref{prop:field}, we obtain
	\[
	\zeta_k(\mathbb{F}_2)
	=
	1-\frac{1}{2^k}.
	\]
	Hence,
	\[
	\zeta_k(R)
	=
	\prod_{i=1}^{n}\left(1-\frac{1}{2^k}\right)
	=
	\left(1-\frac{1}{2^k}\right)^n,
	\]
	which completes the proof.
\end{proof}
	
	
	\subsection{Finite chain rings}
	
	
	Let $(R,M)$ be a finite chain ring with Loewy length $n$ \cite{camillo1974loewy, McDonald1974}, that is,
	\[
	R=M^0\supsetneq M\supsetneq M^2\supsetneq\cdots
	\supsetneq M^{\,n-1}\supsetneq M^n=(0),
	\]
	where $M$ is the unique maximal ideal of $R$, and $n$ is the smallest positive integer satisfying $M^n=(0)$.

	The explicit computation of the ideal zero-product probability for finite chain rings relies on the well-known description of their ideals and the corresponding rule for ideal multiplication. Since every ideal is a power of the unique maximal ideal, the problem reduces to studying the exponents of these powers. We begin by recalling these fundamental structural properties.
	
	\begin{lemma}\label{lem:chain-ideals}
		Every ideal of $R$ is of the form
		\[
		M^i,
		\qquad
		0\le i\le n.
		\]
	\end{lemma}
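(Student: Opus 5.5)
To prove Lemma~\ref{lem:chain-ideals}, I will use the defining property of a finite chain ring: its ideals are totally ordered by inclusion. Combined with the filtration
\[
R=M^0\supsetneq M\supsetneq\cdots\supsetneq M^{n-1}\supsetneq M^n=(0),
\]
this lets me trap an arbitrary ideal between two consecutive powers of $M$. Let $I$ be an ideal of $R$. If $I=(0)=M^n$ there is nothing to show, so assume $I\neq(0)$. Since $M^0=R\supseteq I$ and $M^n=(0)\not\supseteq I$, there is a largest index $i$ with $0\le i\le n-1$ such that $I\subseteq M^i$. By maximality of $i$, $I\not\subseteq M^{i+1}$.

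The key step is to show that $I\subseteq M^i$ and $I\not\subseteq M^{i+1}$ force $I=M^i$. I would pick an element $a\in I\setminus M^{i+1}$ and show that $aR=M^i$; this gives $M^i=aR\subseteq I\subseteq M^i$.

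There are two ways to get $aR=M^i$. The first uses the chain property directly. Since $aR\not\subseteq M^{i+1}$ and ideals are comparable, we get $M^{i+1}\subsetneq aR\subseteq M^i$. Now $M^i/M^{i+1}$ is a vector space over $R/M$. Every ideal of $R$ lying between $M^{i+1}$ and $M^i$ is also comparable with every other such ideal, so the subspaces of $M^i/M^{i+1}$ are totally ordered. A vector space whose subspaces form a chain has dimension at most $1$. Hence the nonzero subspace $aR/M^{i+1}$ equals $M^i/M^{i+1}$, so $M^i=aR+M^{i+1}=aR+M\cdot M^i$. Nakayama's lemma, which applies because $M^i$ is finitely generated and $M$ is the Jacobson radical, then gives $M^i=aR$.

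The second way uses the standard fact that $M$ is principal in a finite chain ring, say $M=\gamma R$. Every element then has the form $u\gamma^j$ with $u$ a unit, and $a\in M^i\setminus M^{i+1}$ forces $a=u\gamma^i$.

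The main obstacle is this middle step. It is exactly where the chain hypothesis enters, through the one-dimensionality of $M^i/M^{i+1}$, and the Nakayama argument is needed to lift the equality from the quotient back to $M^i$. I will present the Nakayama route because it needs no extra structure theory beyond the definition of a chain ring. Finally, the powers $M^0,\ldots,M^n$ are pairwise distinct by the strict inclusions in the filtration, so the ideals of $R$ are exactly these $n+1$ ideals.
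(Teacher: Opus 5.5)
Your proof is correct, and it takes a genuinely different route from the paper. The paper gives no argument: it notes that the ideals are linearly ordered and cites the ``standard structural result'' (McDonald) that every ideal is a power of $M$. You derive the result directly from the chain property and the filtration. The crux is that ideals between $M^{i+1}$ and $M^i$ correspond to $R/M$-subspaces of $M^i/M^{i+1}$, so total ordering forces $\dim_{R/M}(M^i/M^{i+1})\le 1$.

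One simplification is available. Once comparability gives $M^{i+1}\subseteq aR\subseteq M^i$, the equality $aR/M^{i+1}=M^i/M^{i+1}$ already yields $aR=M^i$ by the correspondence theorem, so Nakayama is redundant. You could even drop the element $a$: comparability gives $M^{i+1}\subsetneq I\subseteq M^i$, and the same dimension count forces $I=M^i$.

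Your version also proves more than the lemma, in ways the paper relies on later:
\begin{itemize}
\item Every $a\in M^i\setminus M^{i+1}$ generates $M^i$.
\item $M^i/M^{i+1}$ is exactly one-dimensional, since it is nonzero by the strict inclusions.
\end{itemize}
From these follow $|M^i\setminus M^{i+1}|=(q-1)q^{n-i-1}$ and $ab=0 \iff v(a)+v(b)\ge n$, both of which the paper uses without proof when computing $P_2(R)$. The distinctness of the $M^i$ also justifies the word ``uniquely'' in Theorem~\ref{thm:chain-formula}.
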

	
	\begin{proof}
		Since $R$ is a finite chain ring, its ideals are linearly ordered by inclusion.
		It is a standard structural result for finite chain rings that every ideal is a
		power of the unique maximal ideal $M$. Hence the complete list of ideals is
		\[
		R=M^0,M,M^2,\ldots,M^{n-1},M^n=(0),
		\]
		where $n$ is the Loewy length of $R$.
	\end{proof}
	
	
	\begin{lemma}\label{lem:ideal-product}
		For every pair of integers $i,j$ satisfying
		\[
		0\le i,j\le n,
		\]
		we have
		\[
		M^iM^j=
		\begin{cases}
			M^{i+j}, & i+j\le n,\\
			(0), & i+j\ge n.
		\end{cases}
		\]
		Equivalently,
		\[
		M^iM^j=M^{\min\{i+j,n\}}.
		\]
	\end{lemma}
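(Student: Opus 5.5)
The argument rests on the identity $M^iM^j=M^{i+j}$, valid for all nonnegative integers $i,j$ by the definition of powers of an ideal. Once that is in place, the truncation at $n$ comes directly from $M^n=(0)$. I will therefore argue in two steps: first establish the unrestricted exponent law, then account for the nilpotency of $M$.

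For the first step, recall that $M^i$ is the ideal generated by all products $a_1\cdots a_i$ with $a_r\in M$, with the convention $M^0=R$. The product $M^iM^j$ is generated by products $xy$ with $x\in M^i$ and $y\in M^j$. Every generator $(a_1\cdots a_i)(b_1\cdots b_j)$ is a product of $i+j$ elements of $M$, so it lies in $M^{i+j}$. Conversely, every generator $c_1\cdots c_{i+j}$ of $M^{i+j}$ factors as $(c_1\cdots c_i)(c_{i+1}\cdots c_{i+j})$, which lies in $M^iM^j$. Since both sides are ideals, this gives $M^iM^j=M^{i+j}$. The cases $i=0$ or $j=0$ are immediate because $R$ is the identity for ideal multiplication; equivalently, one can invoke associativity of ideal multiplication and induct on $i+j$.

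For the second step, suppose $i+j\ge n$. Then $M^{i+j}=M^{i+j-n}M^n=M^{i+j-n}\cdot(0)=(0)$, using the first step and the defining property $M^n=(0)$ of the Loewy length. If instead $i+j\le n$, the product is $M^{i+j}$, which is one of the ideals listed in Lemma~\ref{lem:chain-ideals}. The two cases agree at $i+j=n$, where both give $M^n=(0)$, so the case split is consistent. Together these give $M^iM^j=M^{\min\{i+j,n\}}$.

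I do not expect a genuine obstacle. The only point requiring care is the distinction between the exponent law for products of ideals, which holds in any commutative ring, and the claim that $M^{i+j}$ equals $(0)$ exactly when $i+j\ge n$. The "if" direction is all the lemma needs, and it follows from $M^n=(0)$. The "only if" direction, that $M^{i+j}\neq(0)$ when $i+j<n$, is not required by the statement as written; if it were needed, it would follow from the minimality of $n$ in the definition of Loewy length.
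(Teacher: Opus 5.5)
Your proof is correct and follows the same route as the paper: first the exponent law $M^iM^j=M^{i+j}$, then truncation via $M^{i+j}\subseteq M^n=(0)$ when $i+j\ge n$. The only difference is that you verify the exponent law directly from the definition of ideal powers, and correctly note that it holds in any commutative ring, whereas the paper cites it from McDonald as a property of chain rings.
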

	
	\begin{proof}
		If $i+j<n$, then the equality 
		\[
		M^iM^j=M^{i+j}
		\]
		is one of the fundamental properties of powers of the maximal ideal in a chain
		ring \cite{McDonald1974}. If $i+j\ge n$, then
		\[
		M^iM^j=M^{i+j}\subseteq M^n=(0),
		\]
		whence
		\[
		M^iM^j=(0)=M^n.
		\]
		Therefore,
		\[
		M^iM^j=M^{\min\{i+j,n\}},
		\]
		as required.
	\end{proof}
	
	The preceding lemmas reduce the computation of the ideal zero-product probability for finite chain rings to a purely combinatorial problem. Indeed, since every ideal is uniquely determined by the exponent of the maximal ideal and the product of ideals is completely described by the sum of these exponents, counting zero-producing tuples of ideals amounts to counting tuples of nonnegative integers satisfying a simple inequality. This observation yields the following characterization.
	\begin{theorem}\label{thm:chain-formula}
		Let $(R,M)$ be a finite chain ring with Loewy length $n$. Then, for every
		integer $k\ge2$,
		\[
		\zeta_k(R)
		=
		\frac{
			\left|
			\left\{
			(a_1,\ldots,a_k)\in\{0,1,\ldots,n\}^k:
			a_1+\cdots+a_k\ge n
			\right\}
			\right|
		}
		{(n+1)^k}.
		\]
	\end{theorem}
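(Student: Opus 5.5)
The plan is to build a bijection between $\mathcal I(R)^k$ and $\{0,1,\ldots,n\}^k$ that carries zero-producing tuples exactly onto the exponent tuples with $a_1+\cdots+a_k\ge n$, and then count.

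\textbf{Step 1: the exponent bijection.} By Lemma~\ref{lem:chain-ideals}, every ideal of $R$ is one of $M^0,M^1,\ldots,M^n$. These $n+1$ ideals are pairwise distinct, because the chain
\[
R=M^0\supsetneq M\supsetneq\cdots\supsetneq M^n=(0)
\]
is strict by the definition of Loewy length. Hence $i\mapsto M^i$ is a bijection $\{0,\ldots,n\}\to\mathcal I(R)$, and so $|\mathcal I(R)|=n+1$. Taking this map coordinatewise gives a bijection $\{0,\ldots,n\}^k\to\mathcal I(R)^k$, which accounts for the denominator $(n+1)^k$.

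\textbf{Step 2: the product of a $k$-tuple.} Next I will show by induction on $k$ that
\[
M^{a_1}M^{a_2}\cdots M^{a_k}=M^{\min\{a_1+\cdots+a_k,\,n\}}.
\]
The case $k=2$ is Lemma~\ref{lem:ideal-product}. For the inductive step, write the product as $\bigl(M^{a_1}\cdots M^{a_{k-1}}\bigr)M^{a_k}$. By the induction hypothesis this equals $M^{s}M^{a_k}$ with $s=\min\{a_1+\cdots+a_{k-1},n\}\le n$, so Lemma~\ref{lem:ideal-product} applies again and gives $M^{\min\{s+a_k,n\}}$. It remains to check that
\[
\min\{s+a_k,n\}=\min\{a_1+\cdots+a_k,n\}.
\]
If $s<n$, then $s=a_1+\cdots+a_{k-1}$ and the two sides agree directly. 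If $s=n$, then both sides equal $n$, since $a_1+\cdots+a_{k-1}\ge n$ already forces the full sum to be at least $n$. This capping bookkeeping is the only place that needs care; the rest of the argument is routine.

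\textbf{Step 3: identifying zero-producing tuples and counting.} By Step 2 and the distinctness of the powers $M^i$, we have $M^{a_1}\cdots M^{a_k}=(0)=M^n$ if and only if $\min\{\sum a_i,n\}=n$, that is, if and only if $\sum a_i\ge n$. So the bijection of Step 1 restricts to a bijection between the zero-producing $k$-tuples of ideals and the set
\[
\bigl\{(a_1,\ldots,a_k)\in\{0,\ldots,n\}^k : a_1+\cdots+a_k\ge n\bigr\}.
\]
Dividing the cardinality of this set by $|\mathcal I(R)|^k=(n+1)^k$ gives exactly the formula in Theorem~\ref{thm:chain-formula}.
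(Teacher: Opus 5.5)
Your proposal is correct and follows essentially the same route as the paper: identify ideals with exponents via Lemma~\ref{lem:chain-ideals}, obtain $M^{a_1}\cdots M^{a_k}=M^{\min\{a_1+\cdots+a_k,n\}}$ from Lemma~\ref{lem:ideal-product}, and count the exponent tuples. You additionally make explicit two points the paper leaves implicit: the distinctness of the powers $M^i$, which follows from strictness of the chain, and the induction with the capping check that the paper summarizes as ``repeated application'' of the lemma.
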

	
	\begin{proof}
		By Lemma~\ref{lem:chain-ideals}, every ideal of $R$ is uniquely of the form
		$M^{a_i}$, where $0\le a_i\le n$. Thus there are exactly $(n+1)^k$ ordered
		$k$-tuples of ideals.
		
		By repeated application of Lemma~\ref{lem:ideal-product},
		\[
		M^{a_1}\cdots M^{a_k}
		=
		M^{\min\{a_1+\cdots+a_k,n\}}.
		\]
		Hence
		\[
		M^{a_1}\cdots M^{a_k}=(0)
		\]
		if and only if
		\[
		a_1+\cdots+a_k\ge n.
		\]
		Therefore the number of ordered $k$-tuples whose product is the zero ideal is
		precisely
		\[
		\left|
		\left\{
		(a_1,\ldots,a_k)\in\{0,\ldots,n\}^k:
		a_1+\cdots+a_k\ge n
		\right\}
		\right|,
		\]
		and dividing by the total number $(n+1)^k$ of ordered $k$-tuples yields the
		desired formula.
	\end{proof}
	
	An immediate consequence of Theorem~\ref{thm:chain-formula} is that the ideal zero-product probability is determined entirely by the Loewy length of a finite chain ring. In particular, rings with the same Loewy length necessarily have identical values of $\zeta_k(R)$.
	\begin{corollary}\label{cor:loewy}
		Let $R$ and $S$ be finite chain rings having the same Loewy length. Then
		\[
		\zeta_k(R)=\zeta_k(S)
		\]
		for every integer $k\ge2$. Consequently, the ideal zero-product probability is
		a structural invariant of finite chain rings depending only on their Loewy
		length.
	\end{corollary}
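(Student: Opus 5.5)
The plan is to read the corollary off directly from Theorem~\ref{thm:chain-formula}. That theorem gives $\zeta_k(R)$ purely as a lattice-point count,
\[
\zeta_k(R)=\frac{N_k(n)}{(n+1)^k},
\qquad
N_k(n)=\left|\left\{(a_1,\ldots,a_k)\in\{0,1,\ldots,n\}^k : a_1+\cdots+a_k\ge n\right\}\right|,
\]
where $n$ is the Loewy length of $R$. The key observation is that the right-hand side is a function of the two integers $n$ and $k$ alone. No data of $R$ enters it beyond $n$: not its cardinality, characteristic, residue field, or the number of elements of each $M^i$.

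The argument proceeds in three steps.

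\emph{Step 1.} Let $(R,M)$ and $(S,N)$ be finite chain rings with common Loewy length $n$. Apply Theorem~\ref{thm:chain-formula} to $R$ to obtain $\zeta_k(R)=N_k(n)/(n+1)^k$.

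\emph{Step 2.} Apply the same theorem to $S$. Since $S$ also has Loewy length $n$, this gives $\zeta_k(S)=N_k(n)/(n+1)^k$.

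\emph{Step 3.} The two expressions are literally the same rational number, so $\zeta_k(R)=\zeta_k(S)$ for every $k\ge 2$. The ``consequently'' clause is a restatement: the map $R\mapsto\zeta_k(R)$ on finite chain rings factors through $R\mapsto n$.

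If one prefers an explicit structural reason, the bijection $M^i\mapsto N^i$ between $\mathcal I(R)$ and $\mathcal I(S)$ could be used instead. By Lemma~\ref{lem:chain-ideals} it is a bijection, and by Lemma~\ref{lem:ideal-product} it respects products, since both sides multiply via $i,j\mapsto\min\{i+j,n\}$. It also sends the zero ideal to the zero ideal. Hence it restricts to a bijection between the zero-product $k$-tuples of $R$ and those of $S$, exactly as in the proof of Theorem~\ref{thm:isomorphism}, even though $R$ and $S$ need not be isomorphic.

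There is no real obstacle here. The only point requiring care is that Theorem~\ref{thm:chain-formula} already absorbed the nontrivial input, namely the fact that every ideal of a finite chain ring is a power of the maximal ideal and that $M^iM^j=M^{\min\{i+j,n\}}$. The corollary itself is purely formal.
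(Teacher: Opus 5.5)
Your proposal is correct and matches the paper's proof: apply Theorem~\ref{thm:chain-formula} to both rings and observe that the numerator and denominator depend only on $n$ (and $k$). Your supplementary bijection $M^i\mapsto N^i$ is also valid, but note that the paper later uses $N_k(t)$ for the number of tuples with sum exactly $t$, so your $N_k(n)$, which counts tuples with sum at least $n$, clashes with that notation.
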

	
	\begin{proof}
		The formula in Theorem~\ref{thm:chain-formula} depends only on the Loewy length
		$n$, since both the numerator and denominator are determined entirely by $n$.
		Hence finite chain rings having the same Loewy length have identical ideal
		zero-product probabilities.
	\end{proof}
    \begin{remark}
    	The converse of Theorem~\ref{thm:chain-formula} is false. In particular, the sequence
    	\[
    	\{\zeta_k(R)\}_{k\ge2}
    	\]
    	does not determine a finite chain ring up to ring isomorphism.
    	
    	For example, the rings
    	\[
    	\mathbb{Z}_{p^{3}}
    	\quad\text{and}\quad
    	\mathbb{F}_{p}[x]/(x^{3})
    	\]
    	are finite chain rings of Loewy length $3$, but they are not isomorphic since
    	they have different characteristics. Nevertheless, Theorem~\ref{thm:chain-formula} implies that
    	\[
    	\zeta_k(\mathbb{Z}_{p^{3}})
    	=
    	\zeta_k(\mathbb{F}_{p}[x]/(x^{3}))
    	\]
    	for every integer $k\ge2$.
    	Thus, the ideal zero-product probability is strictly weaker than the ring
    	isomorphism type, even when the entire sequence
    	$\{\zeta_k(R)\}_{k\ge2}$ is known.
    \end{remark}

    The general formula of Theorem~\ref{thm:chain-formula} admits particularly simple expressions for small values of $k$. The case $k=2$ is especially noteworthy, since it yields a closed formula depending only on the Loewy length.
    \begin{corollary} \label{cor:valueofzeta2}
    	Let $(R,M)$ be a finite chain ring of Loewy length $n$. Then
    	\[
    	\zeta_2(R)=\frac{n+2}{2(n+1)}.
    	\]
    \end{corollary}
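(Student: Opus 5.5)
We specialize Theorem~\ref{thm:chain-formula} to $k=2$. This reduces the claim to counting the lattice points
\[
N=\left|\{(a_1,a_2)\in\{0,1,\ldots,n\}^2 : a_1+a_2\ge n\}\right|,
\]
and then checking that $N/(n+1)^2=\frac{n+2}{2(n+1)}$, i.e.\ that $N=\frac{(n+1)(n+2)}{2}$.

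We count $N$ by fixing $a_1=i$ with $0\le i\le n$. The admissible values of $a_2$ are then the integers $j$ with $\max\{n-i,0\}\le j\le n$. Since $0\le n-i\le n$, this range is simply $n-i\le j\le n$, which contains $i+1$ values. Summing over $i$ gives
\[
N=\sum_{i=0}^{n}(i+1)=\frac{(n+1)(n+2)}{2}.
\]

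As a cross-check, we can count the complement instead. The pairs with $a_1+a_2\le n-1$ number $\binom{n+1}{2}=\frac{n(n+1)}{2}$, since the upper bound $n$ on each coordinate is never active there. Hence $N=(n+1)^2-\frac{n(n+1)}{2}$, which agrees with the direct count.

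Dividing by $(n+1)^2$ then yields $\zeta_2(R)=\frac{n+2}{2(n+1)}$. There is no real obstacle here. The only point needing care is the boundary: the condition is $a_1+a_2\ge n$ (product equal to $M^n=(0)$), not $>n$, and the case $i=0$ forces $j=n$, which is consistent with the count $i+1=1$.
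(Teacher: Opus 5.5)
Your proof is correct and matches the paper's argument. Both specialize Theorem~\ref{thm:chain-formula} to $k=2$, note that each fixed first exponent $i$ admits $i+1$ values of the second, sum to $\frac{(n+1)(n+2)}{2}$, and divide by $(n+1)^2$; your complementary cross-check, essentially the $k=2$ case of Corollary~\ref{cor:closed-form}, is also correct but not needed.
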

	
	\begin{proof}
		Theorem~\ref{thm:chain-formula} allows us to write that 
		\[
		\zeta_2(R)
		=
		\frac{\left|\{(a,b)\in\{0,1,\ldots,n\}^2:a+b\ge n\}\right|}
		{(n+1)^2}.
		\]
		For each fixed $a\in\{0,1,\ldots,n\}$, there are exactly $a+1$ integers
		$b\in\{0,1,\ldots,n\}$ satisfying $a+b\ge n$. Hence
		\[
		\left|\{(a,b):a+b\ge n\}\right|
		=
		\sum_{a=0}^{n}(a+1)
		=
		\frac{(n+1)(n+2)}{2}.
		\]
		Therefore,
		\[
		\zeta_2(R)
		=
		\frac{(n+1)(n+2)}{2(n+1)^2}
		=
		\frac{n+2}{2(n+1)},
		\]
		which completes the proof.
	\end{proof}

    \begin{remark}
    	The above formula shows that, for finite chain rings, the ideal zero-product probability for two randomly chosen ideals depends only on the Loewy length and is independent of the residue field and the characteristic of the ring.
    \end{remark}
	
	The multiplicative decomposition established in Corollary~\ref{cor:finite-product} suggests that the sequence
	$\{\zeta_k(R)\}_{k\ge2}$ may encode substantial information about the Loewy lengths of the
	chain-ring components of a finite principal ideal ring. Numerical computations support this
	expectation. For example, consider finite principal ideal rings whose multisets of Loewy lengths
	are $(2,5)$ and $(3,3)$, respectively. By Corollary~\ref{cor:valueofzeta2},
	\[
	\zeta_2(2,5)
	=\frac{2+2}{2(2+1)}
	\cdot
	\frac{5+2}{2(5+1)}
	=\frac{2}{3}\cdot\frac{7}{12}
	=\frac{7}{18},
	\]
	whereas
	\[
	\zeta_2(3,3)
	=
	\left(
	\frac{3+2}{2(3+1)}
	\right)^2
	=
	\left(\frac{5}{8}\right)^2
	=
	\frac{25}{64}.
	\]
	Since
	\[
	\frac{7}{18}\neq\frac{25}{64},
	\]
	the corresponding ideal zero-product probabilities are already distinguished for $k=2$. Similar
	computations for larger values of $k$ likewise yield distinct values. These observations provide
	further evidence that the sequence $\{\zeta_k(R)\}_{k\ge2}$ may determine the multiset of Loewy
	lengths of the chain-ring components, motivating the following conjecture.
	
	\begin{conjecture}
		Let
		\[
		R\cong\prod_{i=1}^{m}R_i,\qquad
		S\cong\prod_{j=1}^{\ell}S_j,
		\]
		be finite principal ideal rings, where each component is a finite chain ring.
		If
		\[
		\zeta_k(R)=\zeta_k(S)
		\]
		for every integer $k\ge2$, then
		\[
		m=\ell,
		\]
		and, after a permutation of indices, the corresponding chain-ring components
		have the same Loewy lengths.
	\end{conjecture}
	
	
	\section{Closed formula}\label{sec:clform}
	The characterization obtained in Theorem~\ref{thm:chain-formula} reduces the computation of the ideal zero-product probability for finite chain rings to the enumeration of bounded compositions. We now exploit classical techniques from enumerative combinatorics to derive an explicit closed formula for this enumeration. Substituting the resulting expression into the formula of Theorem~\ref{thm:chain-formula} yields a closed formula for $\zeta_k(R)$, which subsequently extends to finite principal ideal rings by means of the multiplicative property established in Section~3.

	The enumeration of bounded compositions is a classical problem in enumerative combinatorics. It can be derived using several methods, including the principle of inclusion--exclusion, generating functions, and lattice-point counting. Standard references include Stanley~\cite{StanleyEC1} and Wilf~\cite{Wilf}. Related results on restricted compositions are also given in Theorem~2.1(c) of~\cite{jaklivc2010closed}. Since the explicit
	formula obtained below plays a central role in the computation of the ideal
	zero-product probability for finite chain rings, we include a proof for the
	sake of completeness and to make the presentation self-contained.
	
	Throughout this section, we adopt the standard convention that
	\[
	\binom{m}{r}=0
	\]
	whenever $m<r$ or $m<0$. Consequently, the inclusion--exclusion formulas below remain valid without separately specifying the ranges for which the binomial coefficients are nonzero.
	
\begin{lemma}{(Bounded compositions)}\label{lem:bounded-composition}
	Let
	\[
	N_k(t)
	=
	\left|
	\left\{
	(a_1,\ldots,a_k)\in\mathbb{Z}_{\ge0}^k:
	0\le a_i\le n,\;
	a_1+\cdots+a_k=t
	\right\}
	\right|.
	\]
	Then
	\[
	N_k(t)
	=
	\sum_{j=0}^{\lfloor t/(n+1)\rfloor}
	(-1)^j
	\binom{k}{j}
	\binom{t-j(n+1)+k-1}{k-1}.
	\]
\end{lemma}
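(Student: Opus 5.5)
The plan is a standard inclusion--exclusion argument on the upper bounds $a_i\le n$. Let $U$ denote the set of all $(a_1,\ldots,a_k)\in\mathbb{Z}_{\ge0}^k$ with $a_1+\cdots+a_k=t$, with no upper bound imposed. By stars and bars,
\[
|U|=\binom{t+k-1}{k-1}.
\]
For $1\le i\le k$, let $A_i\subseteq U$ be the set of tuples violating the $i$th bound, that is, those with $a_i\ge n+1$. Then $N_k(t)=|U|-\bigl|\bigcup_{i=1}^k A_i\bigr|$, and the principle of inclusion--exclusion gives
\[
N_k(t)=\sum_{S\subseteq\{1,\ldots,k\}}(-1)^{|S|}\Bigl|\bigcap_{i\in S}A_i\Bigr|,
\]
where the empty intersection is interpreted as $U$.

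The key step is to evaluate $\bigl|\bigcap_{i\in S}A_i\bigr|$ for $|S|=j$. I would use the shift $b_i=a_i-(n+1)$ for $i\in S$ and $b_i=a_i$ otherwise. This is a bijection between $\bigcap_{i\in S}A_i$ and the nonnegative integer solutions of $b_1+\cdots+b_k=t-j(n+1)$. Stars and bars again gives
\[
\Bigl|\bigcap_{i\in S}A_i\Bigr|=\binom{t-j(n+1)+k-1}{k-1}
\]
whenever $t-j(n+1)\ge0$. When $t-j(n+1)<0$ the intersection is empty. This count depends only on $j$, and there are $\binom{k}{j}$ subsets of size $j$, so grouping the terms by $j$ yields
\[
N_k(t)=\sum_{j\ge0}(-1)^j\binom{k}{j}\binom{t-j(n+1)+k-1}{k-1}.
\]

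The only delicate point, and the main place where care is needed, is the range of summation. The terms with $j>\lfloor t/(n+1)\rfloor$ correspond to empty intersections, so they contribute nothing and the sum truncates at $\lfloor t/(n+1)\rfloor$. For $j$ in the retained range we have $t-j(n+1)\ge0$, so the top entry $t-j(n+1)+k-1$ is at least $k-1$ and the binomial coefficient correctly counts solutions. Terms with $j>k$ vanish automatically because $\binom{k}{j}=0$, which is consistent with the convention adopted at the start of this section. With the range settled, the stated formula follows.
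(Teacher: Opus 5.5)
Your proposal is correct and follows essentially the same route as the paper: stars and bars for the unrestricted count, inclusion--exclusion over the sets $A_i=\{a_i\ge n+1\}$, and the shift $a_i=b_i+n+1$ on a $j$-subset giving $\binom{t-j(n+1)+k-1}{k-1}$. Both arguments then truncate at $j=\lfloor t/(n+1)\rfloor$ because the intersections are empty beyond that; your explicit subset form of inclusion--exclusion and your remark that $\binom{k}{j}=0$ disposes of $j>k$ make the bookkeeping slightly more careful than the paper's.
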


\begin{proof}
	Ignoring the upper bound $a_i\le n$, the number of nonnegative integer
	solutions of
	\[
	a_1+\cdots+a_k=t
	\]
	is, by the stars-and-bars theorem,
	\[
	\binom{t+k-1}{k-1}.
	\]
	
	To impose the constraints $a_i\le n$, we apply the principle of
	inclusion--exclusion.
	
	For each $i\in\{1,\ldots,k\}$, let
	\[
	A_i
	=
	\{(a_1,\ldots,a_k):
	a_i\ge n+1,\;
	a_1+\cdots+a_k=t\}.
	\]
	Then
	\[
	N_k(t)
	=
	\binom{t+k-1}{k-1}
	-
	\left|\bigcup_{i=1}^{k}A_i\right|.
	\]
	
	Now suppose that exactly $j$ specified variables satisfy
	$a_i\ge n+1$.
	Replacing each such variable by
	\[
	a_i=b_i+n+1,
	\qquad b_i\ge0,
	\]
	transforms the equation into
	\[
	b_1+\cdots+b_j
	+\sum_{\ell\notin J}a_\ell
	=
	t-j(n+1),
	\]
	where $J$ is the chosen set of indices with $|J|=j$.
	
	Again using the stars-and-bars theorem, the number of such solutions is
	\[
	\binom{t-j(n+1)+k-1}{k-1},
	\]
	provided that
	$t-j(n+1)\ge0$; otherwise there are no solutions.
	
	Since there are
	\[
	\binom{k}{j}
	\]
	choices for the set $J$, the principle of inclusion--exclusion yields
	\[
	N_k(t)
	=
	\sum_{j\ge0}
	(-1)^j
	\binom{k}{j}
	\binom{t-j(n+1)+k-1}{k-1}.
	\]
	The summand is zero whenever
	$t-j(n+1)<0$, so the sum may be truncated at
	\[
	j=\left\lfloor\frac{t}{n+1}\right\rfloor.
	\]
	Hence
	\[
	N_k(t)
	=
	\sum_{j=0}^{\lfloor t/(n+1)\rfloor}
	(-1)^j
	\binom{k}{j}
	\binom{t-j(n+1)+k-1}{k-1},
	\]
	which completes the proof.
\end{proof}
	
	The bounded composition numbers introduced in Lemma~\ref{lem:bounded-composition} naturally arise in the computation of the ideal zero-product probability. Combining these counting functions with the characterization established in Theorem~\ref{thm:chain-formula} immediately yields the following representation.
	\begin{corollary}\label{cor:closed-form}
		Let $(R,M)$ be a finite chain ring with Loewy length $n$. Then, for every
		integer $k\ge2$,
		\[
		\zeta_k(R)
		=
		1-
		\frac{
			\displaystyle\sum_{t=0}^{n-1}N_k(t)
		}
		{(n+1)^k},
		\]
		where
		\[
		N_k(t)
		=
		\left|
		\left\{
		(a_1,\ldots,a_k)\in\{0,1,\ldots,n\}^k:
		a_1+\cdots+a_k=t
		\right\}
		\right|.
		\]
	\end{corollary}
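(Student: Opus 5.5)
The plan is to pass to the complementary event in Theorem~\ref{thm:chain-formula} and then partition that event according to the value of the exponent sum. By Theorem~\ref{thm:chain-formula}, $\zeta_k(R)$ equals the number of tuples $(a_1,\ldots,a_k)\in\{0,1,\ldots,n\}^k$ with $a_1+\cdots+a_k\ge n$, divided by $(n+1)^k$. The set $\{0,\ldots,n\}^k$ has exactly $(n+1)^k$ elements, and it is the disjoint union of the tuples with sum at least $n$ and the tuples with sum at most $n-1$. Hence
\[
\zeta_k(R)=1-\frac{\left|\{(a_1,\ldots,a_k)\in\{0,\ldots,n\}^k: a_1+\cdots+a_k\le n-1\}\right|}{(n+1)^k}.
\]

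Next I will count the tuples in the complement by their sum. Every such tuple has a sum $t=a_1+\cdots+a_k$ lying in $\{0,1,\ldots,n-1\}$, because each $a_i\ge 0$. So the complement is the disjoint union over these values of $t$ of the fibres $\{(a_1,\ldots,a_k)\in\{0,\ldots,n\}^k : a_1+\cdots+a_k=t\}$. By definition, the fibre over $t$ has cardinality $N_k(t)$, the same bounded composition count as in Lemma~\ref{lem:bounded-composition}. Summing over $t$ from $0$ to $n-1$ and substituting into the displayed identity gives the stated formula.

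No step is a real obstacle; the argument only requires careful bookkeeping. The one point I will state explicitly is that $N_k(t)$ in the corollary is defined by the sum equation with the constraints $0\le a_i\le n$ built in. This agrees with Lemma~\ref{lem:bounded-composition}, which also supplies an explicit inclusion--exclusion expression for each term. For $t\le n-1$ the upper bound $a_i\le n$ is automatically satisfied, so in fact $N_k(t)=\binom{t+k-1}{k-1}$ (the $j=0$ term of Lemma~\ref{lem:bounded-composition}). I may note this simplification as a remark, but the corollary as stated does not need it.
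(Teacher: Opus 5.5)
Your proposal is correct and follows essentially the same route as the paper: take the complement of the event in Theorem~\ref{thm:chain-formula}, then split the tuples with exponent sum at most $n-1$ into the disjoint fibres $a_1+\cdots+a_k=t$, $0\le t\le n-1$, each of size $N_k(t)$. Your side remark that $N_k(t)=\binom{t+k-1}{k-1}$ for $t\le n-1$ is also correct, and it is exactly the simplification the paper itself uses later in its asymptotic and log-concavity arguments.
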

	
	\begin{proof}
		By Theorem~\ref{thm:chain-formula},
		\[
		\zeta_k(R)
		=
		\frac{
			\left|
			\left\{
			(a_1,\ldots,a_k)\in\{0,\ldots,n\}^k:
			a_1+\cdots+a_k\ge n
			\right\}
			\right|
		}
		{(n+1)^k}.
		\]
		
		Since there are exactly $(n+1)^k$ ordered $k$-tuples
		\[
		(a_1,\ldots,a_k)\in\{0,1,\ldots,n\}^k,
		\]
		we have
		\[
		\left|
		\left\{
		(a_1,\ldots,a_k):
		a_1+\cdots+a_k\ge n
		\right\}
		\right|
		=
		(n+1)^k-
		\left|
		\left\{
		(a_1,\ldots,a_k):
		a_1+\cdots+a_k<n
		\right\}
		\right|.
		\]
		
		Now,
		\[
		a_1+\cdots+a_k<n
		\]
		if and only if
		\[
		a_1+\cdots+a_k=t
		\]
		for some integer
		\[
		0\le t\le n-1.
		\]
		Since these events are mutually disjoint,
		\[
		\left|
		\left\{
		(a_1,\ldots,a_k):
		a_1+\cdots+a_k<n
		\right\}
		\right|
		=
		\sum_{t=0}^{n-1}N_k(t).
		\]
		
		Substituting this into the expression for $\zeta_k(R)$ gives
		\[
		\zeta_k(R)
		=
		\frac{
			(n+1)^k-
			\displaystyle\sum_{t=0}^{n-1}N_k(t)
		}
		{(n+1)^k}
		=
		1-
		\frac{
			\displaystyle\sum_{t=0}^{n-1}N_k(t)
		}
		{(n+1)^k},
		\]
		which completes the proof.
	\end{proof}
	
	We are now in a position to derive an explicit expression for the ideal zero-product probability of a finite chain ring. Combining Corollary~\ref{cor:closed-form} with the inclusion--exclusion formula of Lemma~\ref{lem:bounded-composition} immediately yields the desired closed formula.
	\begin{theorem}\label{thm:closed-form}
		Let $(R,M)$ be a finite chain ring with Loewy length $n$. Then, for every
		integer $k\ge2$,
		\[
		\zeta_k(R)
		=
		1-
		\frac{
			\displaystyle
			\sum_{t=0}^{n-1}
			\sum_{j=0}^{\left\lfloor\frac{t}{n+1}\right\rfloor}
			(-1)^j
			\binom{k}{j}
			\binom{t-j(n+1)+k-1}{k-1}
		}
		{(n+1)^k}.
		\]
	\end{theorem}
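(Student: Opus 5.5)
The plan is to substitute the inclusion--exclusion expression of Lemma~\ref{lem:bounded-composition} for each $N_k(t)$ in Corollary~\ref{cor:closed-form}. Corollary~\ref{cor:closed-form} already gives
\[
\zeta_k(R)=1-\frac{\sum_{t=0}^{n-1}N_k(t)}{(n+1)^k},
\]
where $N_k(t)$ counts the tuples $(a_1,\ldots,a_k)\in\{0,\ldots,n\}^k$ with $a_1+\cdots+a_k=t$. This is exactly the quantity treated in Lemma~\ref{lem:bounded-composition}. The first step is therefore to check that the two definitions of $N_k(t)$ agree. The lemma's set imposes $a_i\in\mathbb{Z}_{\ge0}$ together with $0\le a_i\le n$, which is the same as $\{0,1,\ldots,n\}^k$. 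So, for each $0\le t\le n-1$, we may replace $N_k(t)$ by
\[
\sum_{j=0}^{\lfloor t/(n+1)\rfloor}(-1)^j\binom{k}{j}\binom{t-j(n+1)+k-1}{k-1}.
\]

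Next we sum this identity over $t=0,\ldots,n-1$ and insert the result into the numerator of Corollary~\ref{cor:closed-form}. This produces precisely the double sum in the statement. No interchange of summations or further manipulation is needed, because the statement keeps the inner index range $0\le j\le\lfloor t/(n+1)\rfloor$ exactly as it appears in Lemma~\ref{lem:bounded-composition}.

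There is no real obstacle; the only point needing care is the range of validity. Lemma~\ref{lem:bounded-composition} is proved for all $t\ge0$ and all $k\ge1$, so it applies for every $k\ge2$ and every $0\le t\le n-1$. As a closing remark we can note a simplification. Since $t\le n-1<n+1$, we have $\lfloor t/(n+1)\rfloor=0$, so the inner sum reduces to the $j=0$ term $\binom{t+k-1}{k-1}$. The hockey-stick identity then collapses the numerator to $\binom{n+k-1}{k}$. This simplification is not required for the theorem as stated, but it gives a sanity check against Corollary~\ref{cor:valueofzeta2}. For $k=2$ we get
\[
1-\frac{\binom{n+1}{2}}{(n+1)^2}=1-\frac{n}{2(n+1)}=\frac{n+2}{2(n+1)},
\]
which agrees with that corollary.
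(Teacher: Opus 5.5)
Your proposal is correct and matches the paper's proof, which likewise substitutes the inclusion--exclusion formula of Lemma~\ref{lem:bounded-composition} for each $N_k(t)$ in Corollary~\ref{cor:closed-form}. Your closing remark is also right. Since $\lfloor t/(n+1)\rfloor=0$, the numerator collapses to $\binom{n+k-1}{k}$, which agrees with the expression the paper itself derives later via the hockey-stick identity in its log-concavity theorem.
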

	
	\begin{proof}
		An appeal to Corollary~\ref{cor:closed-form}, gives
		\[
		\zeta_k(R)
		=
		1-
		\frac{
			\displaystyle\sum_{t=0}^{n-1}N_k(t)
		}
		{(n+1)^k},
		\]
		where
		\[
		N_k(t)
		=
		\left|
		\left\{
		(a_1,\ldots,a_k)\in\{0,1,\ldots,n\}^k:
		a_1+\cdots+a_k=t
		\right\}
		\right|.
		\]
		
		Applying Lemma~\ref{lem:bounded-composition}, we obtain
		\[
		N_k(t)
		=
		\sum_{j=0}^{\left\lfloor\frac{t}{n+1}\right\rfloor}
		(-1)^j
		\binom{k}{j}
		\binom{t-j(n+1)+k-1}{k-1}.
		\]
		
		Substituting this expression into the formula above yields
		\[
		\begin{aligned}
			\zeta_k(R)
			&=
			1-
			\frac{
				\displaystyle
				\sum_{t=0}^{n-1}
				N_k(t)
			}
			{(n+1)^k}
			\\
			&=
			1-
			\frac{
				\displaystyle
				\sum_{t=0}^{n-1}
				\sum_{j=0}^{\left\lfloor\frac{t}{n+1}\right\rfloor}
				(-1)^j
				\binom{k}{j}
				\binom{t-j(n+1)+k-1}{k-1}
			}
			{(n+1)^k},
		\end{aligned}
		\]
		which proves the theorem.
	\end{proof}
    The explicit formula obtained in Theorem~\ref{thm:closed-form} extends naturally to finite principal ideal rings. Indeed, every finite commutative principal ideal ring decomposes as a finite direct product of finite chain rings, and the multiplicativity of the ideal zero-product probability established in Section~3 allows the corresponding formulas for the chain-ring components to be combined. This yields the following explicit expression.
    \begin{theorem}
    	Let
    	\[
    	R\cong R_1\times R_2\times\cdots\times R_m
    	\]
    	be a finite principal ideal ring, where each $R_i$ is a finite chain ring having Loewy length $n_i$. Then, for every integer $k\ge2$,
    	\[
    	\zeta_k(R)
    	=
    	\prod_{i=1}^{m}
    	\left(
    	1-
    	\frac{
    		\displaystyle
    		\sum_{t=0}^{n_i-1}
    		\sum_{j=0}^{\left\lfloor\frac{t}{n_i+1}\right\rfloor}
    		(-1)^j
    		\binom{k}{j}
    		\binom{t-j(n_i+1)+k-1}{k-1}
    	}
    	{(n_i+1)^k}
    	\right).
    	\]
    	Consequently, the ideal zero-product probability of a finite principal ideal ring is completely determined by the Loewy lengths of its chain-ring components.
    \end{theorem}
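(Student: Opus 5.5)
The statement follows by combining three earlier results: isomorphism invariance, multiplicativity over direct products, and the closed formula for chain rings.

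First, since $R\cong R_1\times\cdots\times R_m$, Theorem~\ref{thm:isomorphism} lets us replace $R$ by the product $R_1\times\cdots\times R_m$ without changing $\zeta_k$. Corollary~\ref{cor:finite-product} then gives
\[
\zeta_k(R)=\prod_{i=1}^{m}\zeta_k(R_i).
\]

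Second, each $R_i$ is a finite chain ring of Loewy length $n_i$, so Theorem~\ref{thm:closed-form} applies to it with $n=n_i$. Substituting that expression for each factor produces exactly the displayed product formula.

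For the final assertion, each factor in the product depends only on $k$ and $n_i$. Hence, for every $k\ge2$, $\zeta_k(R)$ is a function of the multiset $\{n_1,\ldots,n_m\}$ alone. This is the same reasoning used for Corollary~\ref{cor:loewy}, now applied componentwise.

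No step is a real obstacle. The one point to note is that the decomposition of a finite commutative principal ideal ring into finite chain rings is taken as given in the hypothesis. If one also wants the phrase ``completely determined by the Loewy lengths'' to be intrinsic to $R$, one should remark that the multiset of Loewy lengths does not depend on the chosen decomposition. The reason is that the chain-ring factors are local, and a decomposition of a finite commutative ring into local rings is unique up to isomorphism and permutation of the factors. Strictly speaking, this remark is not needed for the formula itself.
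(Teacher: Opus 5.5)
Your proposal is correct and follows the paper's proof, which likewise applies Corollary~\ref{cor:finite-product} and then Theorem~\ref{thm:closed-form} to each chain-ring factor. Your explicit appeal to Theorem~\ref{thm:isomorphism} and your remark that the multiset of Loewy lengths is intrinsic to $R$ (by uniqueness of the decomposition into local factors) go slightly beyond what the paper states; both are correct.
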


    \begin{proof}
    	Every finite principal ideal ring is isomorphic to a finite direct product of finite chain rings (see, for example, \cite{McDonald1974}). The result therefore follows immediately from Corollary~\ref{cor:finite-product} and Theorem~\ref{thm:closed-form}.
    \end{proof}
	
	
	\section{Generating functions}\label{sec:GF}
	In the previous section, we derived explicit closed formulas for the ideal zero-product probability in terms of the bounded composition numbers
	\[
	N_k(t)
	=
	\left|
	\left\{
	(a_1,\ldots,a_k)\in\{0,1,\ldots,n\}^k :
	a_1+\cdots+a_k=t
	\right\}
	\right|.
	\]
	A natural approach to studying these numbers collectively is through generating functions, which provide a compact and unified encoding of the underlying combinatorial information. In this section, we derive a bivariate generating function for the bounded composition numbers and use it to obtain a corresponding generating function for the sequence of ideal zero-product probabilities of finite chain rings. This generating function offers an alternative and concise representation of the invariant and complements the explicit formulas established in the previous section.
	
	We begin by deriving a bivariate generating function for the bounded composition numbers. This result encapsulates the entire family $\{N_k(t)\}$ in a single analytic expression and forms the basis for the subsequent generating function of the ideal zero-product probabilities.
	\begin{theorem}\label{thm:bivariateGF}
		
		Let
		
		\[
		F(x,y)
		=
		\sum_{k=0}^{\infty}
		\sum_{t=0}^{kn}
		N_k(t)x^ky^t.
		\]
		
		Then
		
		\[
		F(x,y)
		=
		\frac{
			1
		}{
			1-x(1+y+\cdots+y^n)
		}.
		\]
		
	\end{theorem}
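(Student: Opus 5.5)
The plan is to identify, for each fixed $k$, the inner sum $\sum_{t} N_k(t)y^t$ as a polynomial power, and then sum a geometric series in $x$. Put
\[
P(y)=1+y+\cdots+y^n=\sum_{a=0}^{n}y^a .
\]
The first step is to show that for every $k\ge0$,
\[
\sum_{t=0}^{kn}N_k(t)\,y^t=P(y)^k .
\]

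To prove this identity, expand $P(y)^k$ as a product of $k$ copies of $\sum_{a_i=0}^{n}y^{a_i}$. Distributing gives
\[
P(y)^k=\sum_{(a_1,\ldots,a_k)\in\{0,\ldots,n\}^k}y^{a_1+\cdots+a_k}.
\]
Group the terms by the value $t=a_1+\cdots+a_k$. The number of tuples with sum $t$ is exactly $N_k(t)$ as defined in Lemma~\ref{lem:bounded-composition}, and $t$ ranges over $0\le t\le kn$. So the coefficient of $y^t$ in $P(y)^k$ is $N_k(t)$.

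The case $k=0$ needs a convention. The empty tuple has sum $0$, so $N_0(0)=1$ and $N_0(t)=0$ for $t>0$. This matches $P(y)^0=1$, and the $k=0$ term of $F$ is then $1$. I will state this convention explicitly.

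Summing over $k$, we get
\[
F(x,y)=\sum_{k\ge0}x^kP(y)^k=\sum_{k\ge0}\bigl(xP(y)\bigr)^k=\frac{1}{1-xP(y)},
\]
which is the claimed formula.

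The main point needing care is the sense in which the geometric series is summed. I will treat $F$ as a formal power series in $x$ with coefficients in $\mathbb{Z}[y]$. The element $1-xP(y)$ has constant term $1$ in $x$, so it is invertible in $\mathbb{Z}[y][[x]]$. Its inverse is $\sum_k (xP(y))^k$, as one checks by multiplying out and telescoping. This makes the identity valid formally, and no analytic convergence is needed. If an analytic statement is wanted instead, the identity also holds for $|x|\,|P(y)|<1$.
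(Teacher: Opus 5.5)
Your proposal is correct and follows essentially the same route as the paper: identify $(1+y+\cdots+y^n)^k$ as the generating polynomial $\sum_{t} N_k(t)y^t$ of bounded compositions of length $k$, then sum the geometric series in $x$. Your explicit convention $N_0(0)=1$ for the $k=0$ term and your justification of the geometric series as an inverse in $\mathbb{Z}[y][[x]]$ are careful additions that the paper leaves implicit.
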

	
	\begin{proof}
		
		For a fixed integer $k\ge0$,
		
		\[
		(1+y+\cdots+y^n)^k
		\]
		
		is the ordinary generating function for the bounded compositions of length
		$k$.
		
		Indeed, each factor
		
		\[
		1+y+\cdots+y^n
		\]
		
		corresponds to one coordinate
		
		\[
		a_i\in\{0,1,\ldots,n\},
		\]
		
		and the exponent of $y$ records the chosen value.
		
		Hence
		
		\[
		(1+y+\cdots+y^n)^k
		=
		\sum_{t=0}^{kn}
		N_k(t)y^t.
		\]
		
		Multiplying by $x^k$ and summing over all $k\ge0$ yields
		
		\[
		\begin{aligned}
			F(x,y)
			&=
			\sum_{k=0}^{\infty}
			x^k
			(1+y+\cdots+y^n)^k
			\\
			&=
			\sum_{k=0}^{\infty}
			\left(
			x(1+y+\cdots+y^n)
			\right)^k.
		\end{aligned}
		\]
		
		Since this is a geometric series,
		
		\[
		F(x,y)
		=
		\frac{
			1
		}{
			1-x(1+y+\cdots+y^n)
		},
		\]
		
		which completes the proof.
		
	\end{proof}
	
	
	The generating function obtained in Theorem~\ref{thm:bivariateGF} immediately yields a corresponding generating function for the ideal zero-product probabilities of finite chain rings. This follows by combining the generating function for the bounded composition numbers with the explicit formula established in Theorem~\ref{thm:closed-form}.
	
	\begin{corollary}\label{cor:GFzeta}
		
		Let $R$ be a finite chain ring of Loewy length $n$, and define
		
		\[
		G_R(x)
		=
		\sum_{k=0}^{\infty}
		\zeta_k(R)x^k.
		\]
		
		Then
		
		\[
		G_R(x)
		=
		\frac1{1-x}
		-
		\sum_{t=0}^{n-1}
		[y^t]
		\frac{
			1
		}{
			1-\dfrac{x}{n+1}(1+y+\cdots+y^n)
		},
		\]
		
		where $[y^t]$ denotes coefficient extraction.
		
	\end{corollary}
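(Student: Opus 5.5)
The plan is to establish the identity termwise in $x$, treating everything as formal power series, and then sum over $k$ using Theorem~\ref{thm:bivariateGF}. The key observation is that the representation of Corollary~\ref{cor:closed-form},
\[
\zeta_k(R)=1-\frac{\sum_{t=0}^{n-1}N_k(t)}{(n+1)^k},
\]
was proved only for $k\ge2$. The generating function $G_R$, however, also involves $\zeta_0(R)$ and $\zeta_1(R)$. So the first step is to check that the same formula holds for $k=0$ and $k=1$ under the conventions $\zeta_0(R)=0$ and $\zeta_1(R)=1/(n+1)$.

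For $k=1$, we have $N_1(t)=1$ for $0\le t\le n$. Hence $\sum_{t=0}^{n-1}N_1(t)=n$, and the right-hand side equals $1-\frac{n}{n+1}=\frac1{n+1}=\zeta_1(R)$. This uses that $R$ has exactly $n+1$ ideals, by Lemma~\ref{lem:chain-ideals}.

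For $k=0$, the only $0$-tuple is empty and has sum $0$, so $N_0(t)=1$ if $t=0$ and $N_0(t)=0$ otherwise. Since $n\ge1$, the sum $\sum_{t=0}^{n-1}N_0(t)$ equals $1$, and the right-hand side is $1-1=0=\zeta_0(R)$. If $R$ were a field the Loewy length would still be $n=1$, so there is no degenerate case with $n=0$ to worry about.

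With the formula valid for every $k\ge0$, I multiply by $x^k$ and sum. The first term gives $\sum_{k\ge0}x^k=\frac1{1-x}$. The second term is
\[
\sum_{k\ge0}\sum_{t=0}^{n-1}N_k(t)\Bigl(\frac{x}{n+1}\Bigr)^k
=\sum_{t=0}^{n-1}\;\sum_{k\ge0}N_k(t)\Bigl(\frac{x}{n+1}\Bigr)^k .
\]
The interchange is harmless because the $t$-sum is finite. The inner sum is exactly $[y^t]F\bigl(\tfrac{x}{n+1},y\bigr)$. By Theorem~\ref{thm:bivariateGF}, with $x$ replaced by $x/(n+1)$, this equals
\[
[y^t]\,\frac{1}{1-\frac{x}{n+1}(1+y+\cdots+y^n)}.
\]
Subtracting from $\frac1{1-x}$ then gives the claimed expression for $G_R(x)$.

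The only point needing care, and the one I expect to be the main obstacle, is making coefficient extraction legitimate. I would work in the formal power series ring $\mathbb{Q}[y][[x]]$. There, $1-\frac{x}{n+1}(1+\cdots+y^n)$ is invertible because its constant term in $x$ is $1$. Its inverse is the geometric series $\sum_k \bigl(\frac{x}{n+1}\bigr)^k(1+y+\cdots+y^n)^k$, whose $x^k$-coefficient is a polynomial in $y$. Therefore $[y^t]$ acts coefficientwise in $x$ and is well defined, and the substitution $x\mapsto x/(n+1)$ commutes with it.

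Analytically, the same identity holds for $|x|<1$. Indeed, $0\le\zeta_k\le1$ and $\sum_t N_k(t)\le (n+1)^k$, so both series converge absolutely and all rearrangements are justified.
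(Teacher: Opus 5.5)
Your proposal is correct and follows the paper's own proof: you substitute the complement formula of Corollary~\ref{cor:closed-form} term by term, swap the finite $t$-sum with the $k$-sum, and identify $\sum_k N_k(t)z^k$ with $[y^t]$ of the bivariate generating function at $z=x/(n+1)$. Your check that the formula also holds for $k=0$ and $k=1$, under the conventions $\zeta_0(R)=0$ and $\zeta_1(R)=1/(n+1)$, fills a step the paper uses silently, since Corollary~\ref{cor:closed-form} is stated only for $k\ge2$ while $G_R$ sums from $k=0$.
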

	
	\begin{proof}
		
		From Theorem~\ref{thm:chain-formula},
		
		\[
		\zeta_k(R)
		=
		1-
		\frac{
			\sum_{t=0}^{n-1}
			N_k(t)
		}
		{(n+1)^k}.
		\]
		
		Hence
		
		\[
		\begin{aligned}
			G_R(x)
			&=
			\sum_{k=0}^{\infty}x^k
			-
			\sum_{k=0}^{\infty}
			\frac{x^k}{(n+1)^k}
			\sum_{t=0}^{n-1}
			N_k(t)
			\\
			&=
			\frac1{1-x}
			-
			\sum_{t=0}^{n-1}
			\sum_{k=0}^{\infty}
			N_k(t)
			\left(
			\frac{x}{n+1}
			\right)^k.
		\end{aligned}
		\]
		
		Using Theorem~\ref{thm:bivariateGF},
		
		\[
		\sum_{k=0}^{\infty}
		N_k(t)
		z^k
		=
		[y^t]
		\frac{
			1
		}{
			1-z(1+y+\cdots+y^n)
		},
		\]
		
		and substituting
		
		\[
		z=\frac{x}{n+1}
		\]
		
		gives the required formula.
		
	\end{proof}

	
	\section{Examples}\label{sec:examp}
	
	In this section we compute the ideal zero-product probability for several
	important classes of finite rings. These examples illustrate the results
	established in the preceding sections and demonstrate that, for finite chain
	rings, the invariant depends only on the Loewy length.

	
	\begin{example}[The ring $\mathbb{Z}_{p^2}$]
		
		The ideals are
		
		\[
		\mathbb Z_{p^2},
		\quad
		(p),
		\quad
		(0).
		\]
		
		Hence there are three ideals. The Loewy length is $2$. By Theorem~\ref{thm:chain-formula},
		
		\[
		\zeta_k(\mathbb Z_{p^2})
		=
		\frac{
			|\{(a_1,\ldots,a_k):
			a_i\in\{0,1,2\},
			\sum a_i\ge2\}|
		}
		{3^k}.
		\]
		
		Since
		
		\[
		\sum a_i<2
		\]
		
		only when
		
		\[
		\sum a_i=0
		 \quad\quad \text{or} \quad \quad
		\sum a_i=1,
		\]
		
		we obtain
		
		\[
		N_k(0)=1,
		\quad\quad \text{and} \quad \quad
		N_k(1)=k.
		\]
		
		Therefore
		
		\[
		\boxed{
			\zeta_k(\mathbb Z_{p^2})
			=
			1-
			\frac{k+1}{3^k}.
		}
		\]
		
	\end{example}
	
	
	\begin{example}[The ring $\mathbb{Z}_{p^3}$]
		
		The ring $\mathbb{Z}_{p^3}$ is a finite chain ring of Loewy length $3$. Its ideals are precisely
		\[
		\mathbb{Z}_{p^3},\quad
		(p),\quad
		(p^2),\quad
		(0).
		\]
		
		Hence
		
		\[
		|\mathcal I(R)|=4.
		\]
		
		The Loewy length equals	$3$. Now
		
		\[
		\sum a_i<3
		\quad\quad \text{means} \quad \quad
		t=0,1,2.
		\]
		
		We have
		
		\[
		N_k(0)=1,
		\quad\quad
		N_k(1)=k,
		\quad\quad \text{and} \quad \quad
		N_k(2)
		=
		k+\binom{k}{2},
		\]
		
		since either one coordinate equals $2$,	or two coordinates equal $1$. Hence
		
		\[
		\boxed{
			\zeta_k(\mathbb Z_{p^3})
			=
			1-
			\frac{
				1+2k+\binom{k}{2}
			}
			{4^k}.
		}
		\]
		
	\end{example}
	
	
	\begin{example}[The ring $\mathbb{Z}_{p^4}$]
		
		The ideals are
		
		\[
		\mathbb Z_{p^4},
		(p),
		(p^2),
		(p^3),
		(0).
		\]
		
		Hence
		
		\[
		|\mathcal I(R)|=5.
		\]
		
		The Loewy length equals	$4$. So, $t=0,1,2,3$. We compute
		
		\[
		N_k(0)=1,
		\quad 
		N_k(1)=k,
		\quad 
		N_k(2)
		=
		k+\binom{k}{2},
		\quad \text{and} \quad
		N_k(3)
		=
		k
		+
		k(k-1)
		+
		\binom{k}{3},
		\]
		
		corresponding respectively to
		
		\[
		3,
		\quad
		2+1,
		\quad \text{and} \quad
		1+1+1.
		\]
		
		Hence
		
		\[
		\boxed{
			\zeta_k(\mathbb Z_{p^4})
			=
			1-
			\frac{
				1
				+3k
				+k(k-1)
				+\binom{k}{2}
				+\binom{k}{3}
			}
			{5^k}.
		}
		\]
		
	\end{example}
	
	
	\begin{example}[Finite Boolean rings]
		
		We have already seen that if
		
		\[
		R\cong\mathbb F_2^m,
		\]
		
		then Proposition~\ref{prop:boolean} yields
		
		\[
		\boxed{
			\zeta_k(R)
			=
			\left(
			1-\frac1{2^k}
			\right)^m.
		}
		\]
		
	\end{example}
	
	
	\begin{example}[Direct products]
		
		Let
		
		\[
		R
		=
		\mathbb Z_{p^2}
		\times
		\mathbb F_q.
		\]
		
		Using Corollary~\ref{cor:finite-product},
		
		\[
		\begin{aligned}
			\zeta_k(R)
			&=
			\zeta_k(\mathbb Z_{p^2})
			\zeta_k(\mathbb F_q)
			\\
			&=
			\left(
			1-\frac{k+1}{3^k}
			\right)
			\left(
			1-\frac1{2^k}
			\right).
		\end{aligned}
		\]
		
		Similarly,
		
		\[
		\zeta_k
		\left(
		\prod_{i=1}^{m}
		R_i
		\right)
		=
		\prod_{i=1}^{m}
		\zeta_k(R_i).
		\]
		
	\end{example}
	
	\begin{remark}
		The preceding examples illustrate several important features of the ideal zero-product probability. For finite chain rings, the invariant depends only on the Loewy length and is independent of the characteristic of the ring. Moreover, it is multiplicative with respect to finite direct products, making it straightforward to compute for decomposable rings. The examples also indicate that, for a fixed Loewy length, the sequence $\{\zeta_k(R)\}$ approaches $1$ as $k\to\infty$, while the difference $1-\zeta_k(R)$ decreases exponentially at the rate of $(n+1)^{-k}$, where $n$ denotes the Loewy length.
	\end{remark}

	
	\section{Asymptotic behaviour}\label{sec:asymB}
	Having obtained explicit expressions for the ideal zero-product probability, we now turn to its asymptotic behaviour. The combinatorial characterization established earlier admits a natural probabilistic interpretation, which provides an effective framework for studying the limiting behaviour of the sequence $\{\zeta_k(R)\}_{k\ge2}$. In this section, we exploit this interpretation to establish the convergence of $\zeta_k(R)$ and examine its asymptotic properties.

	Throughout this section let $(R,M)$ be a finite chain ring having Loewy length
	$n$. We investigate the asymptotic behaviour of the sequence
	$\{\zeta_k(R)\}_{k\ge2}$.
	
	
	\subsection{A probabilistic interpretation}
	
	The formula established in Theorem~\ref{thm:chain-formula} admits a natural
	probabilistic interpretation.
	
	\begin{theorem}\label{thm:probabilistic}
		
		Let $X_1,X_2,\ldots$ be independent and identically distributed random
		variables, each uniformly distributed on the set
		
		\[
		\{0,1,\ldots,n\}.
		\]
		
		If
		
		\[
		S_k=X_1+\cdots+X_k,
		\]
		
		then
		
		\[
			\zeta_k(R)
			=
			\Pr(S_k\ge n).
		\]
		
	\end{theorem}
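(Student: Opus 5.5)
The plan is to read the count in Theorem~\ref{thm:chain-formula} as a probability under the uniform product measure on $\{0,1,\ldots,n\}^k$. First I will note that, since $X_1,\ldots,X_k$ are independent and each is uniform on $\{0,1,\ldots,n\}$, the random vector $(X_1,\ldots,X_k)$ is uniformly distributed on $\{0,1,\ldots,n\}^k$. Indeed, for every fixed tuple $(a_1,\ldots,a_k)$, independence gives
\[
\Pr\bigl((X_1,\ldots,X_k)=(a_1,\ldots,a_k)\bigr)=\prod_{i=1}^{k}\Pr(X_i=a_i)=\frac{1}{(n+1)^k}.
\]

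Next I will write the event $\{S_k\ge n\}$ as the disjoint union, over all tuples $(a_1,\ldots,a_k)\in\{0,\ldots,n\}^k$ with $a_1+\cdots+a_k\ge n$, of the events $\{(X_1,\ldots,X_k)=(a_1,\ldots,a_k)\}$. By finite additivity,
\[
\Pr(S_k\ge n)=\frac{\left|\{(a_1,\ldots,a_k)\in\{0,\ldots,n\}^k: a_1+\cdots+a_k\ge n\}\right|}{(n+1)^k}.
\]
This is exactly the right-hand side of Theorem~\ref{thm:chain-formula}, so it equals $\zeta_k(R)$ for every $k\ge2$.

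None of these steps is a real obstacle; all the substantive work was done in Theorem~\ref{thm:chain-formula}. The only point to watch is the passage from independence of the coordinates to uniformity of the joint law, which the product formula above settles. If the cases $k=0,1$ are meant to be included, I would check them separately against the conventions fixed earlier. For $k=1$, $\Pr(X_1\ge n)=\frac{1}{n+1}=\frac{1}{|\mathcal I(R)|}$, which agrees with the convention. For $k=0$, $\Pr(S_0\ge n)=\Pr(0\ge n)=0$ for $n\ge1$, which also agrees. The statement itself only concerns $k\ge2$, so these checks are not needed for the proof.
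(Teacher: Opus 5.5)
Your proposal is correct and follows essentially the same route as the paper: independence makes every tuple in $\{0,\ldots,n\}^k$ have probability $1/(n+1)^k$, so $\Pr(S_k\ge n)$ is the count of tuples with sum at least $n$ divided by $(n+1)^k$, which is Theorem~\ref{thm:chain-formula}. Your extra checks for $k=0,1$ are consistent with the paper's conventions (using $|\mathcal I(R)|=n+1$ and $n\ge1$) but are not needed for the statement.
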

	
	\begin{proof}
		
		Since each $X_i$ is uniformly distributed,
		
		\[
		\Pr(X_i=j)=\frac1{n+1},
		\qquad
		0\le j\le n.
		\]
		
		Therefore every $k$-tuple
		
		\[
		(a_1,\ldots,a_k)\in\{0,\ldots,n\}^k
		\]
		
		occurs with probability
		
		\[
		\frac1{(n+1)^k},
		\]
		
		respectively. Hence
		
		\[
		\Pr(S_k\ge n)
		=
		\frac{
			\left|
			\left\{
			(a_1,\ldots,a_k):
			a_1+\cdots+a_k\ge n
			\right\}
			\right|
		}
		{(n+1)^k}.
		\]
		
		The result now follows immediately from
		Theorem~\ref{thm:chain-formula}.
		
	\end{proof}
	
	
	\subsection{Limiting behaviour}
	
	The probabilistic interpretation established in Theorem~\ref{thm:probabilistic},
	together with the explicit enumeration formulas obtained in Section~\ref{sec:clform}, allows us
	to investigate the asymptotic behaviour of the ideal zero-product probability.
	Rather than merely proving that $\zeta_k(R)$ converges to $1$, we determine the
	precise asymptotic behaviour of the error term $1-\zeta_k(R)$, thereby obtaining
	a quantitative description of the rate of convergence.


    \begin{theorem} \label{thm:asymest}
    	Let $(R,M)$ be a finite chain ring with Loewy length $n$. Then
    	\[
    	1-\zeta_k(R)
    	\sim
    	\frac{k^{\,n-1}}
    	{(n-1)!(n+1)^k},
    	\qquad k\to\infty.
    	\]
    	Equivalently,
    	\[
    	\zeta_k(R)
    	=
    	1-
    	\frac{k^{\,n-1}}
    	{(n-1)!(n+1)^k}
    	\left(1+o(1)\right),
    	\qquad k\to\infty.
    	\]
    \end{theorem}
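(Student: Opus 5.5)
By Corollary~\ref{cor:closed-form} we have
\[
1-\zeta_k(R)=\frac{1}{(n+1)^k}\sum_{t=0}^{n-1}N_k(t).
\]
So the whole statement reduces to showing that
\[
\sum_{t=0}^{n-1}N_k(t)\sim \frac{k^{n-1}}{(n-1)!}\qquad (k\to\infty).
\]
The key simplification is that for $0\le t\le n-1$ we have $t<n+1$, hence $\lfloor t/(n+1)\rfloor=0$. Therefore only the $j=0$ term survives in Lemma~\ref{lem:bounded-composition}. Equivalently, since $t\le n-1<n$, the upper bound $a_i\le n$ is automatically satisfied by every nonnegative solution of $a_1+\cdots+a_k=t$. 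Either way, this gives the exact expression
\[
N_k(t)=\binom{t+k-1}{k-1}=\binom{t+k-1}{t},\qquad 0\le t\le n-1.
\]

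Next I sum these terms with the hockey-stick identity:
\[
\sum_{t=0}^{n-1}\binom{t+k-1}{t}=\binom{n+k-1}{n-1}.
\]
One can verify this by induction on $n$ using Pascal's rule. Alternatively, it counts nonnegative solutions of $a_1+\cdots+a_k\le n-1$, by adding a slack variable. This yields the exact closed form
\[
1-\zeta_k(R)=\frac{1}{(n+1)^k}\binom{n+k-1}{n-1}.
\]
As a check, $n=2$ gives $(k+1)/3^k$, in agreement with the $\mathbb{Z}_{p^2}$ example.

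Finally, for fixed $n$ and $k\to\infty$,
\[
\binom{n+k-1}{n-1}=\frac{(k+1)(k+2)\cdots(k+n-1)}{(n-1)!}=\frac{k^{n-1}}{(n-1)!}\prod_{i=1}^{n-1}\Bigl(1+\frac{i}{k}\Bigr).
\]
The product has finitely many factors, each tending to $1$, so it equals $1+o(1)$. This gives both stated forms of the asymptotic.

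The case $n=1$, where $R$ is a field, is consistent: the binomial coefficient is $1$, and the formula gives $1/2^k$, matching Proposition~\ref{prop:field}.

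There is no real obstacle here. The only point needing care is to justify that the inclusion--exclusion corrections vanish for $t<n$. The rest is a standard binomial identity and an elementary limit.
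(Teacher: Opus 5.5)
Your proof is correct and follows the paper's reduction: Corollary~\ref{cor:closed-form}, plus the observation that $N_k(t)=\binom{t+k-1}{t}$ for $t<n$. The routes differ only in the last step. The paper expands each term as $k^t/t!+O(k^{t-1})$ and keeps the dominant $t=n-1$ term, whereas you first sum exactly with the hockey-stick identity to get $1-\zeta_k(R)=\binom{n+k-1}{n-1}/(n+1)^k$ (the same closed form the paper only derives later, in its log-concavity theorem), which gives an exact formula rather than just the leading term.
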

    
    \begin{proof}
    	By Corollary~\ref{cor:closed-form},
    	\[
    	1-\zeta_k(R)
    	=
    	\frac{1}{(n+1)^k}
    	\sum_{t=0}^{n-1}N_k(t),
    	\]
    	where
    	\[
    	N_k(t)
    	=
    	\#\{(a_1,\ldots,a_k)\in\{0,1,\ldots,n\}^k:
    	a_1+\cdots+a_k=t\}.
    	\]
    	
    	Since $t<n$, the upper bound $a_i\le n$ is never active. Hence
    	\[
    	N_k(t)
    	=
    	\binom{t+k-1}{t}.
    	\]
    	
    	For fixed $t$,
    	\[
    	\binom{t+k-1}{t}
    	=
    	\frac{k^t}{t!}
    	+O(k^{t-1}),
    	\qquad k\to\infty.
    	\]
    	
    	Therefore,
    	\[
    	N_k(n-1)
    	=
    	\frac{k^{\,n-1}}{(n-1)!}
    	+O(k^{\,n-2}),
    	\]
    	whereas, for every $0\le t\le n-2$,
    	\[
    	N_k(t)=O(k^{\,n-2}).
    	\]
    	
    	Consequently,
    	\[
    	\sum_{t=0}^{n-1}N_k(t)
    	=
    	\frac{k^{\,n-1}}{(n-1)!}
    	+O(k^{\,n-2}).
    	\]
    	
    	Substituting this into the expression for
    	$1-\zeta_k(R)$ yields
    	\[
    	1-\zeta_k(R)
    	=
    	\frac{1}{(n+1)^k}
    	\left(
    	\frac{k^{\,n-1}}{(n-1)!}
    	+O(k^{\,n-2})
    	\right),
    	\]
    	which is equivalent to
    	\[
    	1-\zeta_k(R)
    	\sim
    	\frac{k^{\,n-1}}
    	{(n-1)!(n+1)^k}.
    	\]
    	This completes the proof.
        \end{proof}
    
    	\begin{corollary}\label{thm:limit}
    		For every finite chain ring,
    		\[
    		\lim_{k\to\infty}\zeta_k(R)=1.
    		\]
    	\end{corollary}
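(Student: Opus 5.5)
The plan is to deduce the limit directly from the asymptotic estimate of Theorem~\ref{thm:asymest}, and to give an elementary bound as a cross-check. By Theorem~\ref{thm:asymest}, for a finite chain ring of Loewy length $n$,
\[
0\le 1-\zeta_k(R)=\frac{1}{(n+1)^k}\sum_{t=0}^{n-1}N_k(t).
\]
The right-hand side is $\frac{k^{n-1}}{(n-1)!(n+1)^k}(1+o(1))$. Since $n\ge1$ gives $n+1\ge2$, the factor $(n+1)^{-k}$ decays exponentially. A polynomial times a decaying exponential tends to $0$, so $k^{n-1}/(n+1)^k\to0$. Hence $1-\zeta_k(R)\to0$, which is the claim.

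To avoid relying on the $o(1)$ term, I would also give a direct bound. For $t\le n-1$ we have $N_k(t)=\binom{t+k-1}{t}\le (t+k)^t\le (n+k)^{n-1}$. Summing over $t$ gives
\[
0\le 1-\zeta_k(R)\le \frac{n(n+k)^{n-1}}{(n+1)^k}.
\]
The ratio of consecutive terms of this bound tends to $1/(n+1)<1$, so it tends to $0$. Alternatively, I would take logarithms: $(n-1)\log(n+k)-k\log(n+1)\to-\infty$.

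There is one case to check, the degenerate case $n=1$, where $R$ is a field. There $1-\zeta_k=2^{-k}$ by Proposition~\ref{prop:field}, which is consistent with the above. Monotonicity (Theorem~\ref{thm:monotonicity}) and the bound $\zeta_k\le1$ also show that the limit exists, but the squeeze above already suffices. No step is a real obstacle. The only care needed is to justify that polynomial growth is dominated by exponential decay, which the ratio test handles.
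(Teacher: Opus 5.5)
Your proof is correct, but it follows a different route from the one the paper writes out. The paper notes that the corollary is immediate from Theorem~\ref{thm:asymest}, which is essentially your first paragraph. It then deliberately gives a probabilistic proof instead. By Theorem~\ref{thm:probabilistic}, $\zeta_k(R)=\Pr(S_k\ge n)$ for the walk $S_k=X_1+\cdots+X_k$ with i.i.d.\ increments uniform on $\{0,1,\ldots,n\}$. Since $\Pr(X_i=n)=1/(n+1)>0$, the second Borel--Cantelli lemma gives $X_i=n$ infinitely often almost surely. So the nondecreasing walk satisfies $S_k\to\infty$ almost surely, and hence $\Pr(S_k\ge n)\to 1$.

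Your argument is instead combinatorial and analytic. From $N_k(t)=\binom{t+k-1}{t}$ for $t<n$ you get the explicit estimate $0\le 1-\zeta_k(R)\le n(n+k)^{n-1}/(n+1)^k$. This yields a rate of convergence and uses no probability theory. The Borel--Cantelli argument is purely qualitative. Its advantage is that it needs no enumeration beyond Theorem~\ref{thm:probabilistic}, and it works for any walk with nonnegative i.i.d.\ increments that are positive with positive probability.

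One small citation slip: the identity $1-\zeta_k(R)=(n+1)^{-k}\sum_{t=0}^{n-1}N_k(t)$ is Corollary~\ref{cor:closed-form}, not Theorem~\ref{thm:asymest}.

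The two viewpoints meet in an even shorter bound, $1-\zeta_k(R)\le\bigl(n/(n+1)\bigr)^k$. It holds because $a_1+\cdots+a_k<n$ forces every $a_i\ne n$. That is exactly the event the paper's Borel--Cantelli argument rules out, now with an explicit probability.
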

    
    \begin{proof}
    	
    	Although this follows immediately from the preceding theorem, we present
    	below an alternative proof based on the probabilistic interpretation of
    	Theorem~\ref{thm:probabilistic}, which provides additional insight into the
    	limiting behaviour of the invariant. Let
    	
    	\[
    	S_k=X_1+\cdots+X_k
    	\]
    	
    	be as in Theorem~\ref{thm:probabilistic}. Since
    	
    	\[
    	X_i\ge0
    	\]
    	
    	for every $i$,
    	
    	\[
    	S_k
    	\]
    	
    	is nondecreasing in $k$. Moreover,
    	
    	\[
    	\Pr(X_i=n)=\frac1{n+1}>0.
    	\]
    	
    	Hence, by the second Borel--Cantelli lemma,
    	
    	\[
    	X_i=n
    	\]
    	
    	occurs infinitely often almost surely. Consequently,
    	
    	\[
    	S_k\longrightarrow\infty
    	\]
    	
    	almost surely. Since
    	
    	\[
    	\zeta_k(R)
    	=
    	\Pr(S_k\ge n),
    	\]
    	
    	it follows that
    	
    	\[
    	\Pr(S_k\ge n)\longrightarrow1,
    	\]
    	
    	and therefore
    	
    	\[
    	\lim_{k\to\infty}\zeta_k(R)=1.
    	\]
    	
    \end{proof}

	\begin{remark}
		Another alternative proof of Corollary~\ref{thm:limit} follows from the Strong Law of Large Numbers. Indeed, if
		\[
		X_1,X_2,\ldots
		\]
		are as in Theorem~\ref{thm:probabilistic}, then
		\[
		\mathbb{E}[X_i]=\frac{n}{2}>0.
		\]
		Hence
		\[
		\frac{S_k}{k}\longrightarrow \frac{n}{2}
		\qquad\text{almost surely},
		\]
		which implies that $S_k\to\infty$ almost surely. Consequently,
		\[
		\Pr(S_k\ge n)\longrightarrow 1,
		\]
		and therefore
		\[
		\lim_{k\to\infty}\zeta_k(R)=1.
		\]
	\end{remark}
	The asymptotic estimate obtained in Theorem~\ref{thm:asymest} suggests that the error term
	\[
	1-\zeta_k(R)
	\]
	possesses additional combinatorial structure. In fact, it is strictly log-concave, as is evident from its closed-form expression.
	\begin{theorem}
		Let $(R,M)$ be a finite chain ring of Loewy length $n$, and define
		\[
		a_k(R)=1-\zeta_k(R).
		\]
		Then, the sequence $\{a_k(R)\}_{k\ge2}$ is strictly log-concave; that is,
		\[
		a_k(R)^2>a_{k-1}(R)a_{k+1}(R)
		\quad \text{for} \quad k\ge2.
		\]
	\end{theorem}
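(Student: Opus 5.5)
The plan is to first turn the error term into a single closed-form expression, and then check log-concavity by a direct ratio comparison. The geometric factor $(n+1)^{-k}$ plays no role in log-concavity, so everything reduces to a one-line inequality about binomial coefficients.

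\emph{Step 1: closed form for $a_k(R)$.} By Corollary~\ref{cor:closed-form}, $a_k(R)=(n+1)^{-k}\sum_{t=0}^{n-1}N_k(t)$. As observed in the proof of Theorem~\ref{thm:asymest}, the bound $a_i\le n$ is inactive when $t<n$, so $N_k(t)=\binom{t+k-1}{t}$. The hockey-stick identity $\sum_{t=0}^{n-1}\binom{k-1+t}{t}=\binom{k+n-1}{n-1}$ then collapses the sum. It can be proved by induction on $n$ using Pascal's rule, or read off as the number of $k$-tuples of nonnegative integers with sum at most $n-1$. This gives
\[
a_k(R)=\frac{1}{(n+1)^k}\binom{k+n-1}{n-1},
\]
which is valid for all $k\ge1$, so $a_{k-1}(R)$ is defined when $k\ge2$.

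\emph{Step 2: reduce to a ratio inequality.} Write $b_k=\binom{k+n-1}{n-1}$. The powers of $n+1$ cancel in $a_k^2$ versus $a_{k-1}a_{k+1}$, since $(n+1)^{-2k}=(n+1)^{-(k-1)}(n+1)^{-(k+1)}$. So the claim is equivalent to $b_k^2>b_{k-1}b_{k+1}$. Because $b_k>0$, this is the same as $b_k/b_{k-1}>b_{k+1}/b_k$. From $b_k/b_{k-1}=(k+n-1)/k$, the inequality becomes
\[
\frac{k+n-1}{k}>\frac{k+n}{k+1}.
\]
Cross-multiplying gives $k^2+nk+n-1>k^2+nk$, that is, $n>1$.

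\emph{Main obstacle: the case $n=1$.} The computation above exposes the only real subtlety. When $n=1$ (for instance a finite field), $b_k=1$ and $a_k=2^{-k}$, which is consistent with Proposition~\ref{prop:field}. In that case the sequence is exactly geometric, and equality $a_k^2=a_{k-1}a_{k+1}$ holds. So the argument proves strict log-concavity precisely for Loewy length $n\ge2$. For $n=1$ I would record that the sequence is log-concave with equality, and therefore state the theorem for $n\ge2$ and add this as a remark. Apart from this point, the only nontrivial ingredient is the hockey-stick identity in Step 1; I would verify it by induction on $n$, using Pascal's rule $\binom{k+n-1}{n-1}+\binom{k+n-1}{n}=\binom{k+n}{n}$.
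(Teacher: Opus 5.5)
Your proposal is correct and follows essentially the same route as the paper: it also obtains $a_k(R)=\binom{k+n-1}{n-1}/(n+1)^k$ from $N_k(t)=\binom{k+t-1}{t}$ and the hockey-stick identity, and then compares consecutive ratios, reducing everything to $\frac{k+n}{k+1}<\frac{k+n-1}{k}$. Your treatment of $n=1$ is also right: the paper's argument only works ``for every $n>1$'', and for finite fields $a_k(R)=2^{-k}$ gives equality, so the theorem as stated needs the hypothesis $n\ge 2$ (for $n=1$ only non-strict log-concavity holds).
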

	
	\begin{proof}
		By Corollary~\ref{cor:closed-form},
		\[
		a_k(R)
		=
		\frac{1}{(n+1)^k}
		\sum_{t=0}^{n-1}N_k(t).
		\]
		Since $t<n$, the upper bound $a_i\le n$ is never active, and therefore
		\[
		N_k(t)=\binom{k+t-1}{t}.
		\]
		Hence
		\[
		a_k(R)
		=
		\frac{1}{(n+1)^k}
		\sum_{t=0}^{n-1}
		\binom{k+t-1}{t}.
		\]
		Using the standard hockey-stick identity,
		\[
		\sum_{t=0}^{m}\binom{k+t-1}{t}
		=
		\binom{k+m}{m},
		\]
		with $m=n-1$, we obtain
		\[
		a_k(R)
		=
		\frac{\binom{k+n-1}{n-1}}
		{(n+1)^k}.
		\]
		
		Next,
		\[
		\frac{a_{k+1}(R)}{a_k(R)}
		=
		\frac{k+n}{(k+1)(n+1)}.
		\]
		Similarly,
		\[
		\frac{a_k(R)}{a_{k-1}(R)}
		=
		\frac{k+n-1}{k(n+1)}.
		\]
		Since
		\[
		\frac{k+n}{k+1}
		<
		\frac{k+n-1}{k},
		\]
		for every $n>1$, it follows that
		\[
		\frac{a_{k+1}(R)}{a_k(R)}
		<
		\frac{a_k(R)}{a_{k-1}(R)},
		\]
		which is equivalent to
		\[
		a_k(R)^2>
		a_{k-1}(R)a_{k+1}(R).
		\]
		Thus $\{a_k(R)\}_{k\ge2}$ is strictly log-concave.
	\end{proof}
	\begin{remark}
		
		Theorem~\ref{thm:probabilistic} identifies the ideal zero-product probability
		with the tail probability of a random walk having bounded increments.
		Consequently, a variety of probabilistic techniques, including concentration
		inequalities, large deviation estimates, local limit theorems and renewal
		methods, may be employed to investigate the asymptotic behaviour of
		$\zeta_k(R)$. Such investigations appear to be promising directions for future
		research.
		
	\end{remark}
	
	\section{Comparison with the classical zero-product probability}\label{sec:comparison}
	
	The ideal zero-product probability introduced in this paper is naturally related to the classical
	zero-product probability of finite rings, which has been studied extensively in the literature \cite{dolvzan2022probability,esmkhani2018probability,esmkhani2019characterization, Sharma2015,sarma2025probabilityproductkelements}.
	For a finite commutative ring $R$, define
	\[
	P_2(R)=
	\frac{\left|\{(a,b)\in R^2:ab=0\}\right|}{|R|^2},
	\]
	that is, the probability that two independently and uniformly chosen ring elements have product
	equal to zero.
	
	Although both $P_2(R)$ and $\zeta_2(R)$ measure the occurrence of zero products, they are
	fundamentally different in nature. The invariant $P_2(R)$ depends on the multiplicative behaviour
	of individual ring elements, whereas $\zeta_2(R)$ measures the interaction of ideals under ideal
	multiplication. Consequently, the two invariants capture different aspects of the algebraic
	structure of a finite ring. For finite chain rings, the distinction becomes particularly transparent. In this case, the classical zero-product probability admits an explicit formula depending on both the Loewy length and the residue field size, whereas the ideal zero-product probability depends only on the Loewy length.
	
	\begin{theorem}
		Let $(R,M)$ be a finite chain ring with residue field of cardinality
		\[
		q=|R/M|
		\]
		and Loewy length $n$. Let $P_2(R)$ be defined as above. Then
		\[
		P_2(R)
		=
		\frac{(n+1)q-n}{q^{\,n+1}}.
		\]
	\end{theorem}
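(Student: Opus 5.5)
Every nonzero element of a finite chain ring has a unique "valuation", and the zero-product condition depends only on valuations. Fix a generator $\pi$ of $M$, so that $M^i=\pi^iR$ and $|M^i|=q^{\,n-i}$. The last equality holds because each quotient $M^i/M^{i+1}$ is a one-dimensional vector space over $R/M$, hence has $q$ elements.

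For $0\le i\le n$ I would set $v(a)=i$ when $a\in M^i\setminus M^{i+1}$, with the convention $v(0)=n$. Then the number of elements of valuation exactly $i$ is
\[
c_i = q^{n-i}-q^{n-i-1}=(q-1)q^{n-i-1}\quad (0\le i\le n-1),\qquad c_n=1 .
\]
Every element of valuation $i<n$ can be written as $u\pi^i$ with $u$ a unit. Hence $ab=u u'\pi^{v(a)+v(b)}$, and this is zero if and only if $v(a)+v(b)\ge n$. This uses Lemma~\ref{lem:ideal-product}, or simply $\pi^{j}\ne0$ for $j<n$ together with the fact that units are not zero divisors.

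The count then reduces to: for each $a$, the number of $b$ with $ab=0$ equals $|\{b: v(b)\ge n-v(a)\}| = |M^{\,n-v(a)}| = q^{\,v(a)}$. Therefore
\[
|\{(a,b):ab=0\}| = \sum_{i=0}^{n} c_i\, q^{i}
= \sum_{i=0}^{n-1}(q-1)q^{n-1} + q^{n}
= n(q-1)q^{n-1}+q^n = q^{n-1}\bigl((n+1)q-n\bigr).
\]
Dividing by $|R|^2=q^{2n}$ gives $P_2(R)=\frac{(n+1)q-n}{q^{\,n+1}}$, as claimed.

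The main obstacle is justifying the structural facts: that $M$ is principal, and that $|M^i/M^{i+1}|=q$. Both are standard for finite chain rings (see \cite{McDonald1974}). Each follows because the chain condition forces $M^i/M^{i+1}$ to have no proper nonzero $R/M$-subspaces, so it is one-dimensional. Once these facts are cited, the rest of the argument is the routine summation above. I would also sanity-check the case $n=1$: for a field the formula gives $(2q-1)/q^2$, which is correct.
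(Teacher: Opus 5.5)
Your proof is correct. It uses the same framework as the paper: the valuation $v$ with $v(0)=n$, class sizes $(q-1)q^{\,n-i-1}$ and $1$, and the criterion $ab=0\iff v(a)+v(b)\ge n$. Where you differ is in how the pairs are counted.

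The paper evaluates $\sum_{i+j\ge n}N_iN_j$ as a two-dimensional sum over valuation pairs. It treats the pairs containing a zero element separately and sums a double geometric series for the remaining pairs with $0\le i,j<n$.

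You instead fix $a$ and note that $\{b:ab=0\}=M^{\,n-v(a)}$, which has $q^{\,v(a)}$ elements. Each valuation class with $i<n$ then contributes $c_iq^i=(q-1)q^{\,n-1}$, independent of $i$, and the total follows at once, with no geometric series and no separate case for zero. This is shorter and less error-prone. You also justify $|M^i|=q^{\,n-i}$ and the principality of $M$, which the paper uses tacitly when it writes down $N_i$.

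What the paper's formulation buys is structural. It exhibits $P_2(R)$ as the same lattice-point count over $\{(i,j):i+j\ge n\}$ that underlies $\zeta_2(R)$ in Corollary~\ref{cor:valueofzeta2}, with weights $N_iN_j$ in place of uniform weights. That parallel is the point of the comparison section.
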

	
	\begin{proof}
		For $0\le i\le n$, let
		\[
		N_i=
		\begin{cases}
			(q-1)q^{\,n-i-1}, & 0\le i<n,\\[1mm]
			1, & i=n.
		\end{cases}
		\]
		Since every nonzero element belongs to a unique set
		\[
		M^i\setminus M^{i+1},
		\qquad
		0\le i<n,
		\]
		and $M^n=(0)$, the quantity $N_i$ is precisely the number of elements having
		valuation $i$.
		
		Let $v:R\to\{0,1,\ldots,n\}$ denote the valuation on $R$, defined by
		\[
		v(a)=
		\begin{cases}
			i,&a\in M^i\setminus M^{i+1},\ 0\le i<n,\\
			n,&a=0.
		\end{cases}
		\]
		Since
		\[
		M^iM^j=M^{\min\{i+j,n\}},
		\]
		by Lemma~\ref{lem:ideal-product}, we obtain
		\[
		ab=0
		\quad\Longleftrightarrow\quad
		v(a)+v(b)\ge n.
		\]
		Hence
		\[
		\left|\{(a,b)\in R^2:ab=0\}\right|
		=
		\sum_{\substack{0\le i,j\le n\\ i+j\ge n}}
		N_iN_j.
		\]
		
		We evaluate this sum by separating the cases in which one of the elements is
		zero.
		
		First,
		\[
		\sum_{\substack{0\le i,j<n\\ i+j\ge n}}
		N_iN_j
		=
		(q-1)^2
		\sum_{i=1}^{n-1}
		\sum_{j=n-i}^{n-1}
		q^{\,2n-i-j-2}.
		\]
		For a fixed $i$, writing $j=n-i+m$ with
		$0\le m\le i-1$, we obtain
		\[
		\sum_{j=n-i}^{n-1}
		q^{\,2n-i-j-2}
		=
		\sum_{m=0}^{i-1}
		q^{\,n-m-2}
		=
		\frac{q^{\,n-1}-q^{\,n-i-1}}{q-1}.
		\]
		Therefore,
		\begin{align*}
			\sum_{\substack{0\le i,j<n\\ i+j\ge n}}
			N_iN_j
			&=
			(q-1)
			\sum_{i=1}^{n-1}
			\left(q^{\,n-1}-q^{\,n-i-1}\right)\\
			&=
			(q-1)(n-1)q^{\,n-1}
			-
			\left(q^{\,n-1}-1\right)\\
			&=
			(nq-n-q)q^{\,n-1}+1.
		\end{align*}
		
		Next,
		\[
		\sum_{i=0}^{n-1}N_i
		=
		q^n-1,
		\]
		since these are precisely the nonzero elements of $R$.
		
		Hence
		\begin{align*}
			\left|\{(a,b)\in R^2:ab=0\}\right|
			&=
			\bigl((nq-n-q)q^{\,n-1}+1\bigr)
			+2(q^n-1)+1\\
			&=
			\bigl((n+1)q-n\bigr)q^{\,n-1}.
		\end{align*}
		
		Finally,
		\[
		|R|=q^n,
		\]
		so dividing by $|R|^2=q^{2n}$ gives
		\[
		P_2(R)
		=
		\frac{\bigl((n+1)q-n\bigr)q^{\,n-1}}{q^{2n}}
		=
		\frac{(n+1)q-n}{q^{\,n+1}},
		\]
		as required.
	\end{proof}
	
	\begin{corollary} \label{cor:difference}
		Let $(R,M)$ be a finite chain ring with residue field of cardinality $q$ and
		Loewy length $n$. Then
		\[
		\zeta_2(R)=\frac{n+2}{2(n+1)},
		\qquad
		P_2(R)=\frac{(n+1)q-n}{q^{\,n+1}}.
		\]
		Consequently, the ideal zero-product probability depends only on the Loewy
		length, whereas the classical zero-product probability depends on both the
		Loewy length and the cardinality of the residue field.
	\end{corollary}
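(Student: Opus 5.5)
The corollary is essentially a juxtaposition of two results already established, followed by a short argument for the "consequently" clause. The plan is to quote the formula $\zeta_2(R)=\frac{n+2}{2(n+1)}$ from Corollary~\ref{cor:valueofzeta2}. That formula came from Theorem~\ref{thm:chain-formula} by counting pairs $(a,b)\in\{0,\ldots,n\}^2$ with $a+b\ge n$. Next, quote $P_2(R)=\frac{(n+1)q-n}{q^{n+1}}$ from the theorem immediately preceding the corollary, which was obtained from the valuation-class sizes $N_i=(q-1)q^{n-i-1}$ and $N_n=1$.

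For the interpretive clause, I would argue as follows.

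\begin{enumerate}
\item \emph{$\zeta_2$ depends only on $n$.} This is immediate from its closed form, or more generally from Corollary~\ref{cor:loewy}.
\item \emph{$P_2$ genuinely depends on $q$.} Fix $n$ and view $f(q)=\frac{(n+1)q-n}{q^{n+1}}$ as a function of $q$. It is not constant: $f(2)=\frac{n+2}{2^{n+1}}$ and $f(3)=\frac{2n+3}{3^{n+1}}$. These differ because cross-multiplying gives $(n+2)3^{n+1}$ versus $(2n+3)2^{n+1}$, and the first is larger for all $n\ge1$. Alternatively, $f$ is strictly decreasing for $q\ge2$, since $f'(q)$ has the sign of $(n+1)q-(n+1)\bigl((n+1)q-n\bigr)$, and this is negative for $q\ge 2$.
\item \emph{Realising both values of $q$.} To show the dependence is real, exhibit chain rings with the same Loewy length $n$ but different residue fields, for instance $\mathbb{F}_2[x]/(x^n)$ and $\mathbb{F}_3[x]/(x^n)$. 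These have equal $\zeta_2$ but different $P_2$.
\item \emph{$P_2$ also depends on $n$.} Conversely, varying $n$ with $q$ fixed changes $P_2$, since $f$ carries the factor $q^{-(n+1)}$.
\end{enumerate}

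There is no real obstacle here. The only point requiring any care is to make ``depends on $q$'' precise, and the explicit pair of rings with equal Loewy length but different residue field cardinalities does exactly that.
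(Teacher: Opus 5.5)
Your proposal is correct and matches the paper, whose proof simply cites Corollary~\ref{cor:valueofzeta2} for $\zeta_2(R)$ and the preceding theorem for $P_2(R)$. Your items 1--3 correctly justify the interpretive clause, which the paper leaves to inspection of the formulas; in item 4 the factor $q^{-(n+1)}$ alone does not settle the dependence on $n$, since the numerator $(n+1)q-n$ also varies with $n$, but the computation $\frac{(n+1)q-n}{q^{n+1}}-\frac{(n+2)q-(n+1)}{q^{n+2}}=\frac{(n+1)(q-1)^2}{q^{n+2}}>0$ for $q\ge2$ closes that gap.
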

	
	\begin{proof}
		The formula for $\zeta_2(R)$ is Corollary~\ref{cor:valueofzeta2}, while the formula for $P_2(R)$
		is given by the preceding theorem.
	\end{proof}
	
	This comparison demonstrates that the ideal zero-product probability is genuinely different from
	the classical element-based probability. Whereas $P_2(R)$ is sensitive to the distribution of
	zero-divisors among the elements of the ring, $\zeta_2(R)$ reflects the multiplicative structure
	of the ideal lattice. Thus the two invariants provide complementary information about finite
	commutative rings.
\begin{remark}	
For fixed Loewy length $n$,
\[
\lim_{q\to\infty}P_2(R)=0,
\]
whereas
\[
\zeta_2(R)=\frac{n+2}{2(n+1)}
\]
is independent of $q$. Thus the classical zero-product probability becomes arbitrarily small as the residue field grows, while the ideal zero-product probability remains unchanged. This further illustrates that the two invariants capture fundamentally different aspects of the structure of finite chain rings.
\end{remark}
The explicit formulas obtained above permit a direct comparison between the
classical zero-product probability and the ideal zero-product probability.
The following result shows that, for finite chain rings, the ideal
zero-product probability always dominates the classical zero-product
probability, and equality occurs only in the unique smallest finite field.
This highlights the fact that the two invariants, although related, measure
fundamentally different aspects of the multiplicative structure of a finite
ring.	

\begin{theorem}\label{thm:ReqF2}
	Let $(R,M)$ be a finite chain ring with residue field of cardinality $q$ and
	Loewy length $n$. Then
	\[
	P_2(R)\le \zeta_2(R),
	\]
	with equality if and only if
	\[
	R\cong \mathbb{F}_2.
	\]
\end{theorem}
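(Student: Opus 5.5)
The approach is to compare the two explicit formulas of Corollary~\ref{cor:difference} directly. We have
\[
P_2(R)=\frac{(n+1)q-n}{q^{\,n+1}},\qquad \zeta_2(R)=\frac{n+2}{2(n+1)},
\]
with $n\ge1$ and $q\ge2$. The right-hand side does not depend on $q$. So the natural plan is to bound $P_2(R)$ from above by its value at the smallest admissible residue field size $q=2$, and then compare two expressions in $n$ alone.

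\emph{Step 1 (monotonicity in $q$).} Treat $f(x)=\bigl((n+1)x-n\bigr)x^{-(n+1)}=(n+1)x^{-n}-nx^{-(n+1)}$ as a function of a real variable $x\ge1$. Differentiating gives
\[
f'(x)=n(n+1)x^{-(n+2)}(1-x),
\]
which is strictly negative for $x>1$. Hence $f$ is strictly decreasing on $[1,\infty)$, and for every prime power $q\ge2$,
\[
P_2(R)=f(q)\le f(2)=\frac{n+2}{2^{\,n+1}},
\]
with equality only when $q=2$.

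\emph{Step 2 (comparison in $n$).} It remains to show
\[
\frac{n+2}{2^{\,n+1}}\le\frac{n+2}{2(n+1)},
\]
which is equivalent to $2^n\ge n+1$. This holds for all $n\ge1$ by Bernoulli's inequality or a one-line induction. Moreover the inequality is strict for $n\ge2$, since $2^{n+1}=2\cdot2^n\ge 2(n+1)>n+2$. Chaining Steps 1 and 2 gives $P_2(R)\le\zeta_2(R)$.

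\emph{Step 3 (equality case).} Equality forces equality in both steps, that is, $q=2$ and $n=1$. Loewy length $1$ means $M=M^1=(0)$, so $R$ is a field, and $|R|=q^n=2$ gives $R\cong\mathbb{F}_2$. Conversely, for $\mathbb{F}_2$ a direct check gives $P_2=\tfrac{3}{4}=\zeta_2$.

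I expect no real obstacle. The only point needing care is the strictness bookkeeping in the equality case: Step 1 must be strictly decreasing in $q$, and Step 2 must be strict for $n\ge2$, so that equality pins down both $q=2$ and $n=1$. A minor additional point is identifying a chain ring of Loewy length $1$ as a field.
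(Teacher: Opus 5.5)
Your proof is correct and is essentially the paper's argument. The paper clears denominators and shows that $F(n,q)=(n+2)q^{n+1}-2(n+1)^2q+2n(n+1)$ is increasing in $q$. You instead show that $P_2$ itself is decreasing in $q$, which is an inessential difference: both routes then reduce to $2^n\ge n+1$ at $q=2$.

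One small slip needs fixing. Your justification of strictness in Step 2, namely $2^{n+1}\ge 2(n+1)>n+2$, is a non sequitur. It never uses $n\ge2$, and it would hold equally at $n=1$, where Step 2 is an equality. What you actually need is $2^n>n+1$ for $n\ge2$, which follows from $2^n=2\cdot2^{n-1}\ge 2n>n+1$.
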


\begin{proof}
	By Corollary~\ref{cor:difference},
	\[
	P_2(R)=\frac{(n+1)q-n}{q^{\,n+1}}
	\quad\text{and}\quad
	\zeta_2(R)=\frac{n+2}{2(n+1)}.
	\]
	Thus
	\[
	P_2(R)\le \zeta_2(R)
	\]
	is equivalent to
	\[
	2(n+1)\bigl((n+1)q-n\bigr)
	\le
	(n+2)q^{\,n+1}.
	\]
	Define
	\[
	F(n,q)
	=
	(n+2)q^{\,n+1}
	-
	2(n+1)^2q
	+
	2n(n+1).
	\]
	Then the desired inequality is precisely
	\[
	F(n,q)\ge0.
	\]
	For fixed $n$,
	\[
	\frac{\partial F}{\partial q}
	=
	(n+1)\bigl((n+2)q^n-2(n+1)\bigr).
	\]
	Since $q\ge2$,
	\[
	q^n\ge2^n,
	\]
	and
	\[
	(n+2)2^n>2(n+1)
	\]
	for every integer $n\ge1$, it follows that
	\[
	\frac{\partial F}{\partial q}>0.
	\]
	Hence $F(n,q)$ is strictly increasing as a function of $q$. Therefore it
	suffices to verify the inequality for $q=2$. Now
	\begin{align*}
		F(n,2)
		&=(n+2)2^{\,n+1}-4(n+1)^2+2n(n+1)\\
		&=(n+2)\bigl(2^{\,n+1}-2(n+1)\bigr).
	\end{align*}
	Since
	\[
	2^n\ge n+1
	\]
	for every integer $n\ge1$, we obtain
	\[
	F(n,2)\ge0.
	\]
	Consequently,
	\[
	F(n,q)\ge F(n,2)\ge0,
	\]
	and therefore
	\[
	P_2(R)\le\zeta_2(R).
	\]
	Equality holds if and only if
	\[
	F(n,q)=0.
	\]
	Since $F(n,q)$ is strictly increasing in $q$, equality is possible only for
	$q=2$. Moreover,
	\[
	F(n,2)=0
	\]
	is equivalent to
	\[
	2^n=n+1,
	\]
	whose unique positive integer solution is $n=1$. A finite chain ring of
	Loewy length one is a field, and since $q=2$, it follows that
	\[
	R\cong\mathbb{F}_2.
	\]
	Conversely,
	\[
	P_2(\mathbb{F}_2)
	=\frac34
	=\zeta_2(\mathbb{F}_2),
	\]
	so equality indeed holds.
\end{proof}
The comparison established above naturally extends beyond finite chain rings.
Indeed, every finite principal ideal ring is a finite direct product of finite
chain rings, so it is enough to establish the multiplicativity of the classical
zero-product probability under direct products.
\begin{theorem}\label{thm:prod2ofP2}
	Let $R_1$ and $R_2$ be two finite commutative rings. Then
	\[
	P_2(R_1\times R_2)=P_2(R_1)P_2(R_2).
	\]
	Consequently, if
	\[
	R\cong\prod_{i=1}^{m}R_i,
	\]
	where each $R_i$ is a finite commutative ring, then
	\[
	P_2(R)=\prod_{i=1}^{m}P_2(R_i).
	\]
\end{theorem}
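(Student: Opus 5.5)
The plan is to mirror the proof of Theorem~\ref{thm:direct-product}, working with element pairs instead of ideal tuples. Multiplication in $R_1\times R_2$ is componentwise, and the zero element is $(0,0)$. For $a=(a_1,a_2)$ and $b=(b_1,b_2)$ in $R_1\times R_2$ we therefore have
\[
ab=(a_1b_1,\,a_2b_2),
\]
so $ab=0$ if and only if $a_1b_1=0$ in $R_1$ and $a_2b_2=0$ in $R_2$.

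From this equivalence, the map
\[
\bigl((a_1,b_1),(a_2,b_2)\bigr)\longmapsto\bigl((a_1,a_2),(b_1,b_2)\bigr)
\]
gives a bijection
\[
\{(a_1,b_1)\in R_1^2:a_1b_1=0\}\times\{(a_2,b_2)\in R_2^2:a_2b_2=0\}\longrightarrow\{(a,b)\in(R_1\times R_2)^2:ab=0\}.
\]
It is clearly injective. It is surjective because every pair in the target decomposes coordinatewise into pairs lying in the two factors, by the equivalence above. Hence the count of zero-product pairs in $R_1\times R_2$ is the product of the corresponding counts in $R_1$ and $R_2$.

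Since $|R_1\times R_2|^2=|R_1|^2|R_2|^2$, dividing both counts gives $P_2(R_1\times R_2)=P_2(R_1)P_2(R_2)$.

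For the finite product statement, I will first use the fact that $P_2$ is invariant under ring isomorphism. A ring isomorphism $R\to S$ maps zero-product pairs bijectively onto zero-product pairs and preserves cardinality, exactly as in Theorem~\ref{thm:isomorphism}. This lets us replace $R$ by $\prod_{i=1}^m R_i$. The result then follows by induction on $m$, writing $\prod_{i=1}^m R_i=\bigl(\prod_{i=1}^{m-1}R_i\bigr)\times R_m$, as in Corollary~\ref{cor:finite-product}.

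None of these steps is a genuine obstacle. The only point requiring care is the surjectivity of the bijection, which rests entirely on the componentwise characterization of $ab=0$.
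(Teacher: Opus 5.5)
Your proposal is correct and essentially matches the paper's proof: you use the componentwise characterization of $ab=0$, count zero-product pairs as a product, divide by $|R_1|^2|R_2|^2$, and induct on the number of factors. Your explicit remark that $P_2$ is invariant under ring isomorphism, which is needed to pass from $R\cong\prod_{i=1}^m R_i$ to the literal product, fills in a small step the paper leaves implicit.
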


\begin{proof}
	Since multiplication in the direct product is componentwise,
	\[
	(a_1,a_2)(b_1,b_2)=0
	\]
	if and only if
	\[
	a_1b_1=0
	\quad\text{and}\quad
	a_2b_2=0.
	\]
	Hence
	\[
	\left|\{((a_1,a_2),(b_1,b_2)):(a_1,a_2)(b_1,b_2)=0\}\right|
	=
	\left|\{(a_1,b_1):a_1b_1=0\}\right|
	\left|\{(a_2,b_2):a_2b_2=0\}\right|.
	\]
	Since \(
	|R_1\times R_2|=|R_1||R_2|,
	\) dividing by $|R_1\times R_2|^2$ gives
	\[
	P_2(R_1\times R_2)=P_2(R_1)P_2(R_2).
	\]
	The general case follows immediately by induction on the number of direct factors.
\end{proof}
The multiplicativity of both invariants, together with the canonical
decomposition of finite principal ideal rings into finite chain rings, enables
us to obtain the following global comparison theorem. In particular, it
identifies finite Boolean rings as the unique finite principal ideal rings for
which the classical and ideal zero-product probabilities coincide.
\begin{theorem}\label{thm:p2lessz2}
	Let $R$ be a finite principal ideal ring. Then
	\[
	P_2(R)\le \zeta_2(R),
	\]
	with equality if and only if $R$ is a finite Boolean ring.
\end{theorem}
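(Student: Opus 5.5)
The plan is to reduce the statement to the chain-ring case via the structure theorem and the two multiplicativity results. Every finite principal ideal ring decomposes as $R\cong\prod_{i=1}^m R_i$ with each $R_i$ a finite chain ring (as used earlier, citing \cite{McDonald1974}). By Theorem~\ref{thm:isomorphism} together with the evident isomorphism invariance of $P_2$, we may take $R$ equal to this product. Corollary~\ref{cor:finite-product} then gives
\[
\zeta_2(R)=\prod_{i=1}^m\zeta_2(R_i),
\]
and Theorem~\ref{thm:prod2ofP2} gives
\[
P_2(R)=\prod_{i=1}^m P_2(R_i).
\]
Theorem~\ref{thm:ReqF2} supplies $0<P_2(R_i)\le\zeta_2(R_i)$ for each $i$; positivity holds since the pair $(0,0)$ always contributes. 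Multiplying these inequalities between nonnegative numbers yields $P_2(R)\le\zeta_2(R)$.

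For the equality case, we argue as follows. Suppose $P_2(R)=\zeta_2(R)$. Since every factor satisfies $0<P_2(R_i)\le\zeta_2(R_i)$, a strict inequality in any one factor would make the whole product strict: replacing that single factor by its strictly larger counterpart strictly increases a product of positive numbers. Hence $P_2(R_i)=\zeta_2(R_i)$ for every $i$. By the equality clause of Theorem~\ref{thm:ReqF2}, each $R_i\cong\mathbb{F}_2$, so $R\cong\mathbb{F}_2^m$ is a finite Boolean ring.

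Conversely, let $R$ be a finite Boolean ring. Then $R\cong\mathbb{F}_2^m$ as recalled in the proof of Proposition~\ref{prop:boolean}. In particular $R$ is a principal ideal ring, since each ideal is a product of ideals of the factors and hence principal. Multiplicativity together with $P_2(\mathbb{F}_2)=\tfrac34=\zeta_2(\mathbb{F}_2)$ then gives equality.

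The only delicate point I expect is the strictness argument in the equality case. It requires positivity of every factor, not merely nonnegativity, so I will state explicitly that $P_2(R_i)\ge 1/|R_i|^2>0$. With that in hand, the product of the inequalities can be an equality only if each factor inequality is an equality. Everything else is a direct assembly of the earlier results.
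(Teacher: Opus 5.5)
Your proposal is correct and follows essentially the same route as the paper. Both decompose $R$ into chain rings, use the multiplicativity of $P_2$ and $\zeta_2$, and apply Theorem~\ref{thm:ReqF2} factor by factor, with positivity of every factor forcing factorwise equality in the equality case. Your explicit bound $P_2(R_i)\ge 1/|R_i|^2$ and your remark on the isomorphism invariance of $P_2$ are small details the paper leaves implicit.
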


\begin{proof}
	Since $R$ is a finite principal ideal ring, there exist finite chain rings
	$R_1,R_2,\ldots,R_m$ such that
	\[
	R\cong\prod_{i=1}^{m}R_i.
	\]
	By Theorem~\ref{thm:prod2ofP2},
	\[
	P_2(R)=\prod_{i=1}^{m}P_2(R_i),
	\]
	while Corollary~\ref{cor:finite-product} gives
	\[
	\zeta_2(R)=\prod_{i=1}^{m}\zeta_2(R_i).
	\]
	Since each $R_i$ is a finite chain ring, Theorem~\ref{thm:ReqF2} yields
	\[
	P_2(R_i)\le \zeta_2(R_i),
	\qquad
	1\le i\le m.
	\]
	Therefore,
	\[
	P_2(R)
	=
	\prod_{i=1}^{m}P_2(R_i)
	\le
	\prod_{i=1}^{m}\zeta_2(R_i)
	=
	\zeta_2(R),
	\]
	proving the desired inequality. Suppose now that
	\[
	P_2(R)=\zeta_2(R).
	\]
	Then
	\[
	\prod_{i=1}^{m}P_2(R_i)
	=
	\prod_{i=1}^{m}\zeta_2(R_i).
	\]
	Since
	\[
	0<P_2(R_i)\le \zeta_2(R_i)
	\]
	for every $i$, equality of the products is possible only if
	\[
	P_2(R_i)=\zeta_2(R_i)
	\]
	for every $i$. By Theorem~\ref{thm:ReqF2}, this occurs if and only if
	\[
	R_i\cong\mathbb{F}_2
	\]
	for every $i$. Consequently,
	\[
	R\cong
	\prod_{i=1}^{m}\mathbb{F}_2,
	\]
	which is precisely a finite Boolean ring. 
	
	Conversely, if $R$ is a finite Boolean ring, then
	\[
	R\cong\prod_{i=1}^{m}\mathbb{F}_2
	\]
	for some positive integer $m$. Since
	\[
	P_2(\mathbb{F}_2)=\zeta_2(\mathbb{F}_2)=\frac34,
	\]
	multiplicativity gives
	\[
	P_2(R)
	=
	\prod_{i=1}^{m}P_2(\mathbb{F}_2)
	=
	\prod_{i=1}^{m}\zeta_2(\mathbb{F}_2)
	=
	\zeta_2(R).
	\]
	This completes the proof.
\end{proof}

	\section{Open problems}\label{sec:OP}
	
	The ideal zero-product probability introduced in this paper gives rise to
	several natural questions whose investigation may lead to a deeper
	understanding of the interaction between ideal theory, finite ring theory, and
	enumerative combinatorics.
	
	\bigskip
	
	\noindent\textbf{Problem 1 (Universality beyond chain rings).}
	Corollary~\ref{cor:loewy} shows that, for finite chain rings, the ideal
	zero-product probability is completely determined by the Loewy length.
	It is natural to ask whether this universality phenomenon extends to
	broader classes of finite rings. In particular, characterize all finite
	commutative rings $R$ for which $\zeta_k(R)$ depends only on the Loewy
	length. Does such a characterization hold for finite principal ideal
	rings, finite local rings, finite valuation rings, or, more generally,
	finite Artinian rings?
	
	\bigskip
	
	\noindent\textbf{Problem 2.}
		Investigate the dependence of $\zeta_k(R)$ on classical ring-theoretic invariants by establishing sharp upper and lower bounds, and extend the theory to finite noncommutative rings, where separate notions based on left, right, and two-sided ideals may be introduced and compared.

		\noindent\textbf{Problem 3.}
		Investigate the hypergraph whose vertices are the ideals of a finite ring and
		whose hyperedges are the ordered $k$-tuples
		\[
		(I_1,\ldots,I_k)
		\]
		satisfying
		\[
		I_1I_2\cdots I_k=(0).
		\]
		Study the relationship between hypergraph invariants and the ideal
		zero-product probability.

		\noindent\textbf{Problem 4.}
		Investigate the extent to which $\zeta_k(R)$ is determined by the algebraic structure of the ring. In particular, determine whether isomorphic ideal lattices imply equal ideal zero-product probabilities, and derive explicit formulas for broader classes of finite rings, including finite valuation rings, Galois rings, and finite principal ideal local rings.

		\noindent\textbf{Problem 5.}
		Study the asymptotic and enumerative aspects of the sequence $\{\zeta_k(R)\}_{k\ge2}$, including its rate of convergence, possible limit laws, generating functions, recurrence relations, and other combinatorial properties.

	\bibliographystyle{abbrv}
	\bibliography{prodideal}

\begin{thebibliography}{10}

\bibitem{buckley2014finite}
S.~M. Buckley, D.~MacHale, and {\'A}.~N. Sh{\'e}.
\newblock Finite rings with many commuting pairs of elements.
\newblock {\em preprint}, 2014.

\bibitem{burness2023commuting}
T.~C. Burness, R.~Guralnick, A.~Moret{\'o}, and G.~Navarro.
\newblock On the commuting probability of p-elements in a finite group.
\newblock {\em Algebra \& Number Theory}, 17(6):1209--1229, 2023.

\bibitem{camillo1974loewy}
V.~Camillo and K.~Fuller.
\newblock On loewy length of rings.
\newblock {\em Pacific Journal of Mathematics}, 53(2):347--354, 1974.

\bibitem{dolvzan2022probability}
D.~Dol{\v{z}}an.
\newblock The probability of zero multiplication in finite rings.
\newblock {\em Bulletin of the Australian Mathematical Society}, 106(1):83--88,
  2022.

\bibitem{doostie2007finite}
H.~Doostie and L.~Pourfaraj.
\newblock Finite rings and loop rings involving the commuting regular elements.
\newblock In {\em International Mathematical Forum}, volume~2, pages
  2579--2586, 2007.

\bibitem{DuttaNath2015}
J.~Dutta, D.~K. Basnet, and R.~K. Nath.
\newblock On commuting probability of finite rings.
\newblock {\em Indagationes Mathematicae}, 28(2):372--382, 2017.

\bibitem{esmkhani2018probability}
M.~Esmkhani and S.~Jafarian~Amiri.
\newblock The probability that the multiplication of two ring elements is zero.
\newblock {\em Journal of Algebra and Its Applications}, 17(03):1850054, 2018.

\bibitem{esmkhani2019characterization}
M.~Esmkhani and S.~Jafarian~Amiri.
\newblock Characterization of rings with nullity degree at least 1/4.
\newblock {\em Journal of Algebra and Its Applications}, 18(04):1950076, 2019.

\bibitem{Gustafson1973}
W.~H. Gustafson.
\newblock What is the probability that two group elements commute?
\newblock {\em The American mathematical monthly}, 80(9):1031--1034, 1973.

\bibitem{jaklivc2010closed}
G.~Jakli{\v{c}}, V.~Vitrih, and E.~{\v{Z}}AGAR.
\newblock Closed form formula for the number of restricted compositions.
\newblock {\em Bulletin of the Australian Mathematical Society},
  81(2):289--297, 2010.

\bibitem{MacHale1976}
D.~MacHale.
\newblock Commutativity in finite rings.
\newblock {\em The American Mathematical Monthly}, 83(1):30--32, 1976.

\bibitem{McDonald1974}
B.~R. McDonald.
\newblock {\em Finite Rings with Identity}, volume~28 of {\em Pure and Applied
  Mathematics}.
\newblock Marcel Dekker, New York, 1974.

\bibitem{mohammed2022probability}
H.~M. Mohammed~Salih.
\newblock On the probability of zero divisor elements in group rings.
\newblock {\em International Journal of Group Theory}, 11(4):253--257, 2022.

\bibitem{sarma2025probabilityproductkelements}
D.~Sarma and T.~Subedi.
\newblock The probability that the product of k elements in a finite ring is
  zero, 2025.

\bibitem{Sharma2015}
D.~Sarma and T.~Subedi.
\newblock The probability that the product of three elements in a finite ring
  is zero.
\newblock {\em Journal of Algebra and Its Applications}, 25(3):2550349, 2026.

\bibitem{StanleyEC1}
R.~P. Stanley.
\newblock {\em Enumerative combinatorics volume 1 second edition}.
\newblock 2011.

\bibitem{Wilf}
H.~S. Wilf.
\newblock {\em Generating Functionology}.
\newblock CRC press, 2005.

\end{thebibliography}
\end{document}